\newcommand{\vvert}{|\!|\!|}
\newcommand{\RR}{{\mathbb R}}
\newcommand{\CC}{{\mathbb C}}
\newcommand{\NN}{{\mathbb N}}
\newcommand{\ZZ}{\mathbb Z}
\newcommand{\EE}{{\mathbb E}}
\def\d{{\rm d}}
\def\B{{\mathcal B}}
\def\E{{\mathcal E}}
\def\P{{\mathcal P}}
\def\S{{\mathcal S}}
\def\T{{\mathcal T}}
\numberwithin{equation}{section}
\newtheorem{theo}{Theorem}
\newtheorem{prop}[theo]{Proposition}
\newtheorem{coro}[theo]{Corollary}
\newtheorem{lemma}[theo]{Lemma}
\newtheorem{defi}[theo]{Definition}
\theoremstyle{remark}
\newtheorem{remark}[theo]{Remark}
\begin{document}

\parindent = 0cm

\title
{
Necessary and sufficient conditions to be an eigenvalue for linearly recurrent
dynamical Cantor systems
}

\author{Xavier Bressaud}
\address{Centro de Modelamiento Ma\-te\-m\'a\-ti\-co
UMR 2071 UCHILE-CNRS, Casilla 170/3 correo 3,
Santiago, Chile, and
Institut de Math\'ematiques de Luminy,
163 avenue de Luminy, Case 907, 13288 Marseille Cedex 9, France.}
\email{bressaud@dim.uchile.cl, bressaud@iml.univ-mrs.fr}

\author{Fabien Durand}
\address{Laboratoire Ami\'enois
de Math\'ematiques Fondamentales et
Appliqu\'ees, CNRS-UMR 6140, Universit\'{e} de Picardie
Jules Verne, 33 rue Saint Leu, 80000 Amiens, France.}
\email{fabien.durand@u-picardie.fr}

\author{Alejandro Maass}
\address{Departamento de Ingenier\'{\i}a
Matem\'atica, Universidad de Chile
and Centro de Modelamiento Ma\-te\-m\'a\-ti\-co,
UMR 2071 UCHILE-CNRS, Casilla 170/3 correo 3,
Santiago, Chile.}
\email{amaass@dim.uchile.cl}

\subjclass{Primary: 54H20; Secondary: 37B20 }
\keywords{minimal Cantor systems, linearly recurrent dynamical
systems, eigenvalues}

\begin{abstract}
We give necessary and sufficient conditions to have measurable and
continuous eigenfunctions for linearly recurrent
Cantor dynamical systems.
We also construct explicitly an example of linearly recurrent system
with  nontrivial Kronecker factor and a trivial maximal equicontinuous factor.
\end{abstract}

\date{July 4, 2004}
\maketitle
\markboth{Xavier Bressaud, Fabien Durand, Alejandro
Maass}{Necessary and sufficient conditions to be an eigenvalue }

\section{Introduction}

Let $(X,T)$ be a topological dynamical system, that is,
$X$ is a compact metric space and $T:X \to X$ is a homeomorphism.
Let $\mu$ be a $T$-invariant probability measure on $X$.
In the classification of dynamical systems in ergodic theory and
topological dynamics rotation factors play a central role. In the
measure theoretical context this is reflected by
the existence of a $T$-invariant sub $\sigma$-algebra
$\mathcal{K}_\mu$ of the Borel $\sigma$-algebra of $X$, $\mathcal{B}_X$,
such that $$L^2(X,\mathcal{K}_\mu,\mu)
= \overline{<\{ f\in L^2(X,\B_X,\mu)\setminus \{0\};
\exists \lambda \in \CC,
f\circ T= \lambda f \}>}.$$
It is the subspace spanned by the eigenfunctions which determines
the Kronecker factor.
From a purely topological point of view the role of the Kronecker factor
is played by the maximal equicontinuous factor.
It can be defined in several ways. When $(X,T)$ is minimal
(all orbits are dense), it is determined by the continuous eigenfunctions.
So it is relevant to ask
whether there exist continuous eigenfunctions; or even under which
conditions measure theoretical eigenvalues can be
associated to continuous eigenfunctions.

In \cite{CDHM} these questions are considered for {\it linearly
recurrent systems}. These systems are characterized by the
existence of a nested sequence of clopen (for closed and
open) Kakutani-Rohlin (CKR) partitions of the system $(\P(n);
n\in \NN)$ verifying some technical conditions we call {\bf (KR1)},
{\bf (KR2)},..., {\bf (KR6)} (see below),  and
such that the height of the towers of each partition
increases ``linearly'' from one level to the other. A partial answer
to the former question is given in terms of the sequence of
matrices $(M(n); n \geq 1)$ relating towers from different levels
in \cite{CDHM}.
A complete answer to this question is given in the next theorem.

We need some extra notations.
For each real number $x$ we write $\vvert x\vvert$ for the distance of $x$
to the nearest integer.
For a vector $V=(v_1,\dots,v_m)^T \in \RR^m$, we write
$$
\Vert V\Vert=\max_{1\leq j\leq m}|v_j|\text{ and }
\vvert V\vvert=\max_{1\leq j\leq m}\vvert v_j\vvert\ .
$$
For $n\geq 2$ we put $P(n)=M(n) \cdots M(2)$ and
$H(1)=M(1)$.

\begin{theo}
\label{ssi}
Let $(X,T)$ be a linearly recurrent Cantor system given
by an increasing sequence of CKR
partitions with associated matrices
$(M(n); n \geq 1)$, and let $\mu$ be the unique invariant measure.
Let $\lambda=\exp(2i\pi \alpha)$.
\begin{enumerate}
\item
\label{measurable}
$\lambda$ is an eigenvalue of $(X,T)$ with respect to $\mu$ if and only if
$$\displaystyle \sum_{n\ge 2} \vvert \alpha P(n) H(1) \vvert^2
<\infty \ . $$
\item
\label{continuous}
$\lambda$ is a continuous eigenvalue of
$(X,T)$ if and only if
$$\displaystyle \sum_{n\ge 2}\vvert \alpha P(n) H(1)
\vvert <\infty \ . $$
\end{enumerate}
\end{theo}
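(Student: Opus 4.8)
The plan is to read the quantity $P(n)H(1)=M(n)\cdots M(2)M(1)$ as the vector $h(n)=(h_1(n),\dots,h_{C(n)}(n))^T$ of heights of the towers of $\P(n)$, so that $\vvert\alpha P(n)H(1)\vvert=\max_k\vvert\alpha h_k(n)\vvert$, and to exploit the elementary comparison $\vvert\alpha h_k(n)\vvert\asymp|\lambda^{h_k(n)}-1|$ together with $f\circ T^{h}=\lambda^{h}f$. Since a linearly recurrent system is minimal and uniquely ergodic, $\mu$ is ergodic and any eigenfunction $f$ has constant modulus; I would normalize $|f|=1$. Write $B(n)$ for the base of $\P(n)$, $B_k(n)\subseteq B(n)$ for the base of the $k$-th tower, and $a_k(n)=\mu(B_k(n))^{-1}\int_{B_k(n)}f\,d\mu$ for the average of $f$ there. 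Because $f\circ T=\lambda f$, the conditional expectation $f_n=\EE(f\mid\P(n))$ equals $\lambda^{j}a_k(n)$ on the atom $T^jB_k(n)$, and by the generating property in the (KR) conditions $f_n\to f$ in $L^2$.

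For the necessity in (\ref{measurable}) I would use that $(f_n)$ is an $L^2$-bounded martingale for the increasing filtration $(\P(n))$, so its orthogonal increments satisfy $\sum_n\|f_{n+1}-f_n\|_2^2\le\|f\|_2^2<\infty$. The core computation evaluates $\|f_{n+1}-f_n\|_2^2$ through the Kakutani--Rohlin nesting: a tower of $\P(n+1)$ is a stack of towers of $\P(n)$, and if the $m$-th level-$n$ subtower $l_m$ inside tower $k$ sits at cumulative height $H_m$ then on that block $f_{n+1}-f_n=\lambda^{i}(\lambda^{H_m}a_k(n+1)-a_{l_m}(n))$, whence
\[
\|f_{n+1}-f_n\|_2^2=\sum_{k,m}h_{l_m}(n)\,\mu(B_k(n+1))\,\bigl|\lambda^{H_m}a_k(n+1)-a_{l_m}(n)\bigr|^2 .
\]
Comparing the blocks $m$ and $m+1$ of a common stack turns the ratio $\lambda^{H_{m+1}}/\lambda^{H_m}=\lambda^{h_{l_m}(n)}$ into the factor I want, yielding relations $a_{l_{m+1}}(n)\approx\lambda^{h_{l_m}(n)}a_{l_m}(n)$ from which each $\vvert\alpha h_l(n)\vvert$ is controlled by finitely many increment terms. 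Linear recurrence is what makes this go through: it bounds $C(n)$, the entries of $M(n)$ and the stack lengths, forces the measures $\mu(B_k(n))$ and the heights to be mutually comparable at each level, and ensures that every tower recurs in a bounded window; moreover $\sum_k\mu(B_k(n))|a_k(n)|^2=\|f_n\|_2^2\to1$ keeps the averages non-degenerate on $\mu$-average. Together these give $\sum_n\vvert\alpha h(n)\vvert^2\lesssim\sum_n\|f_{n+1}-f_n\|_2^2<\infty$.

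For sufficiency in (\ref{measurable}) I would build the eigenfunction directly. Letting $j_n(x)$ be the height of $x$ inside its level-$n$ tower, set $g_n=\lambda^{j_n(x)}$, a $\P(n)$-measurable unimodular function. On the $m$-th level-$n$ block of a level-$(n+1)$ tower one has $g_{n+1}-g_n=\lambda^{i}(\lambda^{H_m}-1)$, and the telescoping identity $\lambda^{H_m}-1=\sum_{m'<m}\lambda^{H_{m'}}(\lambda^{h_{l_{m'}}(n)}-1)$ with bounded stack length (linear recurrence) gives $|\lambda^{H_m}-1|\lesssim\vvert\alpha h(n)\vvert$ uniformly. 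Since $\sum_{k,m}h_{l_m}(n)\mu(B_k(n+1))=1$, this yields $\|g_{n+1}-g_n\|_2^2\lesssim\vvert\alpha h(n)\vvert^2$, so $\sum_n\vvert\alpha h(n)\vvert^2<\infty$ makes $(g_n)$ Cauchy in $L^2$. The limit $g$ is unimodular, and because $g_n\circ T=\lambda g_n$ off the top levels $\bigcup_kT^{h_k(n)-1}B_k(n)$, whose measure $\mu(B(n))\to0$, passing to the limit gives $g\circ T=\lambda g$.

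Statement (\ref{continuous}) is the same architecture with the $L^2$-norm replaced by the supremum norm. For sufficiency, the same $g_n$ are locally constant hence continuous, and $\|g_{n+1}-g_n\|_\infty=\max_{k,m}|\lambda^{H_m}-1|\lesssim\vvert\alpha h(n)\vvert$, so $\sum_n\vvert\alpha h(n)\vvert<\infty$ forces uniform convergence to a continuous eigenfunction. For necessity, a continuous eigenfunction $f$ is uniformly continuous; since the clopen bases $B(n)$ shrink to a point $f$ is nearly constant on $B(n)$, and comparing $f$ at the bottom and top of each tower via $f\circ T^{h_k(n)}=\lambda^{h_k(n)}f$ gives $\vvert\alpha h_k(n)\vvert\lesssim\mathrm{osc}_{B(n)}f$ with a telescoping control that is summable rather than merely square-summable. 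The main obstacle throughout is the lower bound in the necessity of (\ref{measurable}): converting square-summability of the martingale increments into square-summability of the individual $\vvert\alpha h_k(n)\vvert$ requires ruling out degeneracy of the averages $a_k(n)$ and disentangling the cumulative phases $\lambda^{H_m}$ from the single-step phases $\lambda^{h_l(n)}$, both of which rely essentially on the uniform estimates provided by linear recurrence.
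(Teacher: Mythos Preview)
Your outline correctly identifies the parts that were already known from \cite{CDHM} (necessity in \eqref{measurable}, sufficiency in \eqref{continuous}), and your arguments for those parts are essentially the standard ones. The genuine gaps lie exactly in the two directions that are new in this paper.

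\medskip

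\textbf{Sufficiency in \eqref{measurable}.} Your pointwise bound $|g_{n+1}(x)-g_n(x)|=|\lambda^{H_m}-1|\lesssim\vvert\alpha H(n)\vvert$ is correct, but it only gives $\|g_{n+1}-g_n\|_2\lesssim\vvert\alpha H(n)\vvert$. From this, the triangle inequality yields $L^2$-Cauchy only when $\sum_n\vvert\alpha H(n)\vvert<\infty$, which is the hypothesis of \eqref{continuous}, not of \eqref{measurable}. To pass from $\sum_n\|g_{n+1}-g_n\|_2^2<\infty$ to convergence of $g_n$ you would need the increments $g_{n+1}-g_n$ to be orthogonal, and they are not: $g_n=\lambda^{j_n}$ is $\P(n)$-measurable but $(g_n)$ is not a martingale, since $\EE_\mu(g_{n+1}\mid\P(n))\ne g_n$ in general. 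This is precisely the obstruction the paper has to work around. Its argument centers the increments, splits each one as $X_n=Y_n+Z_n$ with $Z_n=X_n-\EE_\mu(X_n\mid\P(n))$ (these \emph{are} orthogonal, so $\sum Z_n$ converges), and then handles the $\P(n)$-measurable pieces $Y_n$ by showing they are in fact $\sigma(\tau_n)$-measurable and invoking an exponential mixing bound for the non-stationary Markov chain $(\tau_n)$ to control the cross-terms $\EE_\mu(Y_nY_{n-k})$ by $C\beta^k\kappa_n\kappa_{n-k}$. That Markov-chain mixing (the paper's Lemma~\ref{MixExpo}) is the missing idea in your proposal; without it, square-summability simply does not force $L^2$ convergence.

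\medskip

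\textbf{Necessity in \eqref{continuous}.} Your oscillation argument yields $\vvert\alpha h_k(n)\vvert\lesssim\mathrm{osc}_{B(n)}f\to 0$, but there is no general reason for $\mathrm{osc}_{B(n)}f$ to be \emph{summable}; uniform continuity of $f$ and shrinking of $B(n)$ give decay to zero, not a rate. The assertion ``a telescoping control that is summable rather than merely square-summable'' is exactly the heart of the matter and is left unjustified. The paper's route is quite different: it first upgrades the hypothesis to the statement that the real series $\sum_j\langle s_j(x),P(j)v\rangle$ converges for \emph{every} $x\in X$ (not merely modulo $\ZZ$), and then, using {\bf (KR5')}, constructs by hand two points $x,y$ with $s_n(x)-s_n(y)=e_{i(n)}$ on a prescribed set of indices and $0$ elsewhere, so that subtracting the two convergent series isolates $\sum_n\|P(n)v\|$. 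This combinatorial construction is what turns pointwise convergence for all $x$ into summability; your sketch has no counterpart to it.
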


In \cite{CDHM} the authors prove the necessary condition in the statement
\eqref{measurable} and the sufficient condition in the statement \eqref{continuous}.
One of the most relevant facts is that both
conditions do not depend on
the order of levels in the towers defining the system but just
on the matrices.

\section{Definitions and background}
\label{definitions}

\subsection{Dynamical systems}

By a {\it topological dynamical system} we mean a couple $(X,T)$
where $X$ is a compact metric space and $T: X \to X$ is a homeomorphism.
We say that it is a {\it Cantor system} if $ X $ is a Cantor space;
that is, $ X $
has a countable basis of its topology which consists of closed and
open sets
(clopen sets) and does not have isolated points.
We only deal here with minimal Cantor systems.

A complex number $\lambda$ is a {\it continuous eigenvalue}
of $(X,T)$ if there exists a continuous function $f : X\to \CC$,
$f\not = 0$, such that
$f\circ T = \lambda f$; $f$ is called a {\it continuous
eigenfunction}
(associated to $\lambda$).
Let $\mu$ be a
$T$-invariant probability measure, i.e., $T\mu = \mu$, defined on
the Borel $\sigma$-algebra $\B_X$ of $X$. A complex number
$\lambda$ is an {\it eigenvalue} of the dynamical system $(X,T)$
with respect to $\mu$ if there exists $f\in L^2 (X,\B_X,\mu)$,
$f\not = 0$, such that $f\circ T = \lambda f$; $f$ is called an
{\it eigenfunction} (associated to $\lambda$). If the system is
ergodic, then every eigenvalue is of modulus 1, and every
eigenfunction has a constant modulus. Of course continuous eigenvalues are
eigenvalues.

In this paper we mainly consider topological dynamical systems
$(X,T)$ which are uniquely ergodic and minimal.
That is, systems that admit a unique
invariant probability measure which is ergodic, and such that
the unique $T$-invariant sets are $X$ and $\emptyset$.

\subsection{Partitions and towers}

\label{subsecpart}

Sequences of partitions associated to minimal Cantor
systems were used in \cite{HPS} to build representations
of such systems as adic transformations on ordered
Bratteli diagrams. Here we do not introduce the whole formalism of
Bratteli diagrams since we will only use the language describing
the tower structure. Both languages are very close.
We recall some definitions and fix some notations.

Let $(X,T)$ be a minimal Cantor system.
A {\it clopen Kakutani-Rokhlin partition} (CKR partition) is a
partition $\P$ of $X$ given by
\begin{equation}
\label{eq:def-KR}
\P = \{ T^{-j} B_k ; 1\leq k\leq C , \ 0 \leq j< h_k \}
\end{equation}
where $C$ is a positive integer,
$B_1,\dots , B_C$ are clopen subsets of $X$ and
$h_1,\dots,h_k$ are positive integers.
For $1\leq k\leq C$, the $k$-th {\it tower} of $\P$ is

$$
\T_k=\bigcup_{j=0}^{h_k-1} T^{-j} B_k
$$

and its height is $h_k$;
the {\it roof} of $\P$ is the set $B =\bigcup_{1\leq k\leq C}B_k$.
Let
\begin{equation}
\label{eq:def-seq-KR}
\bigl(
\P (n)=
\{
T^{-j}B_{k} (n); 1\leq k\leq C(n),\ 0\le j<h_{k}(n)
\}
\ ; \ n\in\NN
\bigr)
\end{equation}
be a sequence of CKR partitions. For every $n\in \NN$ and $1\leq k\leq
C(n)$,
$B(n)$ is the roof of $\P (n)$ and $\T_k(n)$ is the
$k$-th tower of $\P(n)$. We assume that $\P (0)$ is the
trivial partition, that is, $B (0)=X$, $C(0)=1$ and $h_{1} (0) = 1$.

We say that $(\P (n);n\in \NN)$ is \emph{nested} if for every
$n\in \NN$ it satisfies:

\medskip

{\bf (KR1)} $B (n+1) \subseteq B (n)$;

\medskip

{\bf (KR2)} $\P (n+1) \succeq \P (n)$; i.e., for all $A\in \P (n+1)$
there exists $A^{'}\in \P (n)$ such that $A\subseteq A^{'}$;

\medskip

{\bf (KR3)} $ \bigcap_{n\in \NN} B (n) $ consists of a unique point;

\medskip

{\bf (KR4)} the sequence of partitions spans the topology of $X$;

\medskip

In \cite{HPS} it is proven that given a minimal Cantor system
$(X,T)$ there exists a nested sequence of CKR partitions fulfilling
{\bf (KR1)}--{\bf (KR4)} ({\bf (KR1)}, {\bf (KR2)}, {\bf (KR3)} and
{\bf (KR4)}) and the following
additional technical conditions:

\medskip

{\bf (KR5)} for all $n\geq 1$, $1\leq k \leq C (n-1)$, $1\leq l\leq C
(n)$,
there exists $0 \leq j < h_{l} (n)$ such that $T^{-j} B_{l}(n)
\subseteq B_{k}(n-1)$;

\medskip

{\bf (KR6)} for all $n\geq 1$, $B (n) \subseteq B_{1}(n-1)$.

\medskip

We associate to $(\P (n); n \in \NN)$
the sequence of matrices $(M(n) ; n\ge 1 )$, where
$M (n) = (m_{l,k}(n); 1\leq l \leq C (n) , 1\leq k \leq C (n-1))$ is
given by
$$
m_{l,k}(n) =
\#
\{
0 \leq j < h_{l}(n) ; T^{-j} B_{l}(n) \subseteq B_{k}(n-1)
\}.
$$
Notice that {\bf (KR5)} is equivalent to:
for all $n\ge 1$, $M(n)$ has strictly positive entries.
For $n\geq 0$ set $H(n)=(h_{l}(n) ; 1\leq l\leq C (n))^T$.
As the sequence of partitions is nested
$H (n)=M (n)H (n-1)$ for $n\geq 1$. Notice that $H(1)=M(1)$.
For $n>m\geq 0$ we define
$$
P(n,m)=M(n)M(n-1)\dots M(m+1) \hbox{ and } P(n)=P(n,1)\ .
$$
Clearly
$$
P_{l,k}(n,m)=\#\bigl\{ 0\leq j < h_l(n);\
T^{-j}B_l(n)\subseteq B_k(m) \bigr \},
$$
for $1 \leq l \leq C(n), \ 1 \leq k \leq C(m)$, and
$$
P(n,m)H(m)= H(n)=P(n)H(1)\ .
$$

\begin{figure}
\vspace{0.375cm}
\resizebox*{0.7\textwidth}{!}{\input{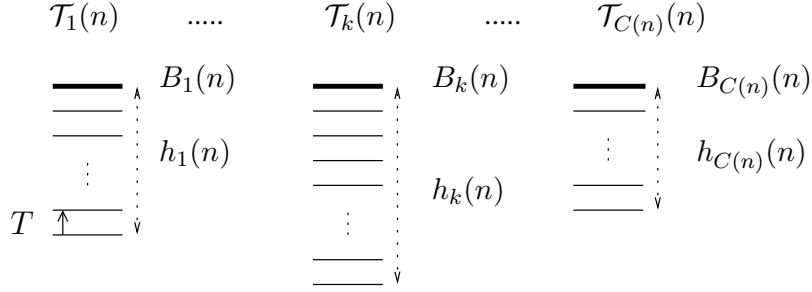}}
\vspace{0.375cm} \caption{CKR partition of level $n$: (a) $X$ is
partitioned in $C(n)$ towers. Each tower $\mathcal{T}_k(n)$, $1
\leq k \leq C(n)$, is composed by $h_k (n)$ disjoint sets, called
{\it stages} of the tower. The top of a tower is the roof $B_k(n)$. (b)
The dynamics of $T$ consists in going up from one stage to the other
of a tower until the roof. Points in a roof are sent to the
bottom of the towers; two points in the same roof can be send to
different towers. \label{Figure1}}
\end{figure}

\subsection{Linearly recurrent systems}
\label{linrec}

The notion of linearly recurrent minimal Cantor system
(also called linearly recurrent system) in the generality
we present below was stated in \cite{CDHM}. It is an extension of the
concept of linearly recurrent subshift introduced in \cite{DHS}.

\begin{defi}
A minimal Cantor system $(X,T)$ is {\it linearly recurrent} (with
constant $L$) if there exists a nested sequence of
CKR partitions
$(\P(n)=\{T^{-j}B_{k}(n); 1\leq k\leq C(n), 0\le j<h_{k}(n)\};n\in\NN
)$
satisfying {\bf (KR1)}--{\bf (KR6)} and

\medskip

{\bf (LR)}
there exists $L$ such that for all  $n\geq 1$,
$l\in \{ 1, \dots , C(n) \}$ and $k \in \{ 1 , \dots , C(n-1) \}$
$$
 h_{l}(n)\leq L \ h_{k}(n-1)\ .
$$
\end{defi}

Most of the basic dynamical properties of linearly
recurrent minimal Cantor systems are described in \cite{CDHM}.
In particular, they are
uniquely ergodic and the unique invariant measure is never strongly mixing.
In addition, $C(n) \leq L$ for any $n \in \NN$ and the set of matrices
$\{M(n); n \geq 1 \}$ is finite.

To prove Theorem \ref{ssi} we will need to consider property
\medskip

{\bf (KR5')} for all $n\geq 2$, $1\leq k \leq C (n-1)$, $1\leq l\leq C
(n)$,
there exist $0 \leq j < j' < h_{l} (n)$ such that $T^{-j} B_{l}(n)
\subseteq B_{k}(n-1)$ and $T^{-j'} B_{l}(n)
\subseteq B_{k}(n-1)$,
\medskip

instead of {\bf (KR5)}.
This condition is equivalent to say that the coefficients of $M(n)$ are
strictly larger than $1$ for $n \geq 2$.

Let $(X,T)$ be a linearly recurrent system
given by a nested sequence of CKR partitions
$(\P(n);n\in\NN)$ which verifies {\bf (KR1)}-{\bf (KR6)} and {\bf (LR)}.
Then the sequence of partitions defined by
$\P'(0)=\P(0)$ and  $\P'(n)=\P(2n-1)$ for $n \geq 1$, is a
sequence of nested CKR partitions of the system
which verifies
{\bf (KR1)}--{\bf (KR4)}, {\bf (KR5')}, {\bf (KR6)} and {\bf (LR)}
(with another constant). It follows that $M'(1)=M(1)$ and
$M'(n)=M(2n-1)M(2n-2)$ for $n \geq 2$, where $(M(n); n\geq 1)$ and
$(M'(n);n \geq 1)$ are the sequence of matrices associated to the partitions
$(\P(n);n \in \NN)$ and $(\P'(n);n \in \NN)$ respectively.
Moreover,
\begin{equation}
\label{KR}
\displaystyle \sum_{n\ge 2} \vvert \alpha P(n) H(1) \vvert^p <\infty \
\Leftrightarrow \
\displaystyle \sum_{n\ge 2} \vvert \alpha P'(n) H(1) \vvert^p
<\infty
\end{equation}
where $\alpha \in \RR$ and $p\in \{1,2\}$.

\section{Markov chain associated to a linearly recurrent system}
\label{markov}

Let $(X,T)$ be a linearly recurrent system and let $\mu$ be its
unique invariant measure. Consider a sequence $(\P (n) ; n \geq 0
)$ of CKR partitions which satisfies {\bf (KR1)}-{\bf (KR6)} and
{\bf (LR)} with constant $L$ and let $(M(n) ; n\geq 1 )$ be the
sequence of matrices associated. The purpose of this section is to
formalize the fact that there exists a Markovian measurable
structure behind the tower structure.

The following relation will be of constant use in the paper. 
For $n\geq 1$ put $\mu(n)=(\mu(B_t(n));1 \leq t \leq C(n))$ (the vector of
measures of the roofs at level $n$). It follows directly from the structure 
of towers that for $1\leq k < n$
\begin{equation}
\label{rel-mesure}
\mu(n-k)=M^T(n-k+1) \cdots M^T(n)\mu(n) \ .
\end{equation}

\subsection{First entrance times and combinatorial structure of the
towers}
In this subsection we define several concepts that will be extensively used
later. An illustration of them is given in Figure \ref{Figure2}.

Define the first entrance time map to the roof $B(n)$, $r_n: X \to \NN$, by
$$r_n(x)= \min \{ j \geq 0 ; T^j(x) \in B(n) \} \ . $$
Since $(X,T)$ is minimal and $B(n)$ is a clopen set, then $r_n$ is
finite and continuous. Define the tower of level $n$ map $\tau_n:
X \to \NN$ by
$$\tau_n(x)=k \text{ if and only if } x \in \T_k(n)
\text{ for some } 1 \leq k \leq C(n)\ . $$
Remark that

\begin{equation}
\label{eq:retour}
r_n(T(x))-r_n(x)=
\begin{cases}
 -1 & \text{ if $x \notin B(n)$}, \\
h_k(n)-1 & \text{ if $x\in B(n)$ and $\tau_n(T(x))=k$}.
\end{cases}
\end{equation}

Let $n \geq 1$ and $1 \leq t \leq C(n)$. By hypothesis {\bf
(KR5)}, several stages in the tower $\T_t(n)$ are included in the
roof $B(n-1)$, in particular stage $B_t(n)$. The number of such
stages is
$$
m_t(n)=\sum_{k =1}^{C(n-1)} m_{t,k}(n)=
\#\{ 0 \leq j < h_t(n) ; T^{-j}B_t(n) \subseteq B(n-1) \} \ .
$$
Let $\{e_1,e_2,\dots,e_{m_t(n)} \}= \{ 0 \leq j < h_t(n) ;
T^{-j}B_t(n) \subseteq B(n-1) \}$ with $h_t (n) > e_1 > e_2 > ...
> e_{m_t(n)}=0$. The integers $e_1,...,e_{m_t(n)}$ are the first entrance
times of points belonging to $\T_t(n)\cap B(n-1)$ into $B_t(n)$.
Moreover, for all $1\leq l \leq m_t (n)$ there is a unique $k \in
\{1,..., C(n-1) \}$ such that
$$
T^{-e_l}B_t(n) \subseteq B_{k}(n-1).
$$
Denote this $k$ by $\theta_l^t(n-1)$. From {\bf (KR6)} we have
\begin{align}
\label{rel:thetakr6}
\theta^t_{m_t(n)} (n-1) = 1 .
\end{align}
We set
\begin{align}
\label{codage}
\theta^t(n-1)=\theta_1^t(n-1) \cdots \theta_{m_t(n)}^t(n-1) \in
\{1,..., C(n-1) \}^* .
\end{align}
Remark that $e_l-e_{l+1}$ is the height of the
$\theta_{l+1}^t(n-1)$-th tower of $\P(n-1)$ for $1\leq l < m_t
(n)$. Thus,
$$
e_l = \sum_{k=l+1}
^{m_t(n)} h_{\theta_k^t(n-1)}(n-1).
$$

Now, the tower $\T_t(n)$ can be decomposed as a disjoint union of
the towers of $\P(n-1)$ it intersects. More precisely,
$\T_t(n)=\bigcup_{l=1}^{m_t(n)} \E_{l,t}(n-1)$, where
$$
\E_{l,t}(n-1)= \bigcup_{j=e_{l-1}-1}^{e_{l}} T^{-j} B_t(n)=
\bigcup_{j=0}^{h_{\theta_l^t(n-1)}(n-1)-1} T^{-j -e_l} B_t(n).
$$
By definition,
$$\E_{l,t}(n-1) \subseteq \bigcup_{j=0}^{h_{\theta_l^t(n-1)}(n-1)-1}
T^{-j} B_{\theta_l^t(n-1)}(n-1).$$

For $x \in X$ denote by $l_n(x)$ the unique integer in
$\{1,...,m_{\tau_n(x)}(n)\}$ such that $x \in
\E_{l_n(x),\tau_n(x)}(n-1)$. The following lemma follows from the
construction. The proof is left to the reader.

\begin{lemma}
\label{important}
For all $x\in X$ we have

\begin{align}
\label{eq:atom}
& \bigcap_{k=1}^n \E_{l_k(x),\tau_k(x)}(k-1)= T^{-r_n(x)}
B_{\tau_n(x)}(n); \\
\label{eq:properties}
&\{x\}= \bigcap_{n \geq 1} \E_{l_n(x),\tau_n(x)}(n-1).
\end{align}
Moreover, given
$$
(t_n; n\geq 0) \in \prod_{n\geq 0} \{1,...,C(n)\}, \ \
(j_n; n\geq 1) \in \prod_{n\geq 1} \{1,...,m_{t_n}(n)\}
$$
such that
$\theta^{t_n}_{j_n}(n-1)=t_{n-1}$ for $n\geq 1$, then there exists
a unique $x \in X$ such that
\begin{align}
\label{eq:union}
& \left( (l_n(x),\tau_n(x)); n\geq 1 \right)=
\left((j_n,t_n); n\geq 1 \right) \ .
\end{align}
\end{lemma}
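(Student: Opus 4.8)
The plan is to establish \eqref{eq:atom} by induction on $n$, deduce \eqref{eq:properties} immediately from it, and then prove \eqref{eq:union} by running the same computation ``forwards''. The key geometric observation throughout is that the sets $\E_{l,t}(n-1)$ sit inside the towers of $\P(n-1)$ stage by stage: applying $T^{-j}$ to $T^{-e_l}B_t(n)\subseteq B_{\theta_l^t(n-1)}(n-1)$ gives
\[
T^{-j-e_l}B_t(n)\subseteq T^{-j}B_{\theta_l^t(n-1)}(n-1)\qquad(0\le j<h_{\theta_l^t(n-1)}(n-1)),
\]
so the $j$-th stage of $\E_{l,t}(n-1)$ lies in the $j$-th stage of the tower $\T_{\theta_l^t(n-1)}(n-1)$.

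For \eqref{eq:atom} the base case $n=1$ is clear, since $\P(0)$ is trivial and hence $\E_{l,t}(0)=T^{-e_l}B_t(1)$ is a single stage equal to the atom of $\P(1)$ containing $x$. For the inductive step I intersect the hypothesis $\bigcap_{k=1}^{n-1}\E_{l_k(x),\tau_k(x)}(k-1)=T^{-r_{n-1}(x)}B_{\tau_{n-1}(x)}(n-1)$ with $\E_{l_n(x),\tau_n(x)}(n-1)$. As $x$ belongs to $\T_{\tau_{n-1}(x)}(n-1)$ and to $\E_{l_n(x),\tau_n(x)}(n-1)\subseteq\T_{\theta_{l_n(x)}^{\tau_n(x)}(n-1)}(n-1)$, disjointness of the towers of $\P(n-1)$ forces $\theta_{l_n(x)}^{\tau_n(x)}(n-1)=\tau_{n-1}(x)$. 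The displayed inclusion, together with disjointness of the stages $T^{-j}B_{\tau_{n-1}(x)}(n-1)$, then selects exactly the term $j=r_{n-1}(x)$, leaving $T^{-r_{n-1}(x)-e_{l_n(x)}}B_{\tau_n(x)}(n)$. A short check on the $e_l$ (using $e_{l-1}-e_l=h_{\tau_{n-1}(x)}(n-1)$ and $r_{n-1}(x)<h_{\tau_{n-1}(x)}(n-1)$) shows $r_{n-1}(x)+e_{l_n(x)}<h_{\tau_n(x)}(n)$, so this is a genuine atom of $\P(n)$ containing $x$; by uniqueness of atoms it equals $T^{-r_n(x)}B_{\tau_n(x)}(n)$ and $r_n(x)=r_{n-1}(x)+e_{l_n(x)}$.

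Statement \eqref{eq:properties} is then immediate: intersecting \eqref{eq:atom} over $n$ writes $\bigcap_{n\ge1}\E_{l_n(x),\tau_n(x)}(n-1)$ as the nested intersection $\bigcap_{n\ge1}T^{-r_n(x)}B_{\tau_n(x)}(n)$ of the atoms of $\P(n)$ through $x$, which reduces to $\{x\}$ because by {\bf (KR4)} the partitions span the topology. For \eqref{eq:union} I run the same induction forwards: given compatible data $(t_n),(j_n)$ with $\theta_{j_n}^{t_n}(n-1)=t_{n-1}$, the identical computation (with the compatibility hypothesis now playing the role of the equality $\theta_{l_n(x)}^{\tau_n(x)}(n-1)=\tau_{n-1}(x)$ derived above) shows that $\bigcap_{k=1}^n\E_{j_k,t_k}(k-1)=T^{-s_n}B_{t_n}(n)$ is a nonempty atom of $\P(n)$, with $s_n=s_{n-1}+e_{j_n}$. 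These atoms are nested, nonempty and clopen, so by compactness their intersection is nonempty, and by {\bf (KR4)} it is a single point $\{x\}$. For this $x$ one has $x\in T^{-s_n}B_{t_n}(n)$, whence $\tau_n(x)=t_n$ and $r_n(x)=s_n$, and since $x\in\E_{j_n,t_n}(n-1)$ the uniqueness in the definition of $l_n$ gives $l_n(x)=j_n$; uniqueness of $x$ follows since any point with this coding lies in every $T^{-s_n}B_{t_n}(n)$.

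The only delicate part is the index bookkeeping in the inductive step: keeping track of the meaning of the $e_l$ (including the convention $e_0=h_t(n)$ for the top piece) and checking that the cumulative height $s_n$ never leaves $[0,h_{t_n}(n))$, so that each partial intersection is an honest single atom rather than empty. Everything else is a direct consequence of the stage-by-stage nesting of the $\E_{l,t}$ and of {\bf (KR4)}.
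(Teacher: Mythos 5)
Your proof is correct: the induction on $n$ for \eqref{eq:atom} via the stage-by-stage inclusion $T^{-j-e_l}B_t(n)\subseteq T^{-j}B_{\theta_l^t(n-1)}(n-1)$, the passage to \eqref{eq:properties} by {\bf (KR2)} and {\bf (KR4)}, and the compactness argument for \eqref{eq:union} are all sound, and your identity $r_n(x)=r_{n-1}(x)+e_{l_n(x)}$ is consistent with Lemma~\ref{le:rn}. The paper explicitly leaves this proof to the reader as ``following from the construction,'' and your argument is precisely the intended direct verification, with the index bookkeeping (the convention $e_0=h_t(n)$ and the bound $s_n<h_{t_n}(n)$) correctly attended to.
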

Remark that the set in \eqref{eq:atom} is the atom of the partition 
$P(n)$ containing $x$. 
\vskip 1cm

For all $n\geq 1$ and $x\in X$ define
$s_{n-1} (x) = (s_{n-1,t} (x) ; 1\leq  t \leq C(n-1) )$ by
$$
s_{n-1,t}(x) = \# \{j;  r_{n-1}(x) < j  \leq r_{n} (x), \  T^j x \in B_{t}(n-1) \} .
$$
It also holds that,
$$s_{n-1,t} (x)= \# \{j; l_{n}(x) < j \leq m_{\tau_{n}(x)}(n) , \
\theta_j^{\tau_{n}(x)}(n-1)=t \}.$$ 
In other words, the vector $s_{n-1}(x)$
counts, in each coordinate $1 \leq t \leq C(n-1)$, the number of
times the tower $\T_t(n-1)$ is crossed by a point $x$, after its
first return to the roof of level $n-1$, and before reaching the
roof of the tower of level $n$ it belongs to. Notice that $s_{n-1}$
does not consider the order in which the towers are visited. 
In the following figure we illustrate the notations introduced
previously.

\begin{figure}[h]
\resizebox*{0.48\textwidth}{!}{\input{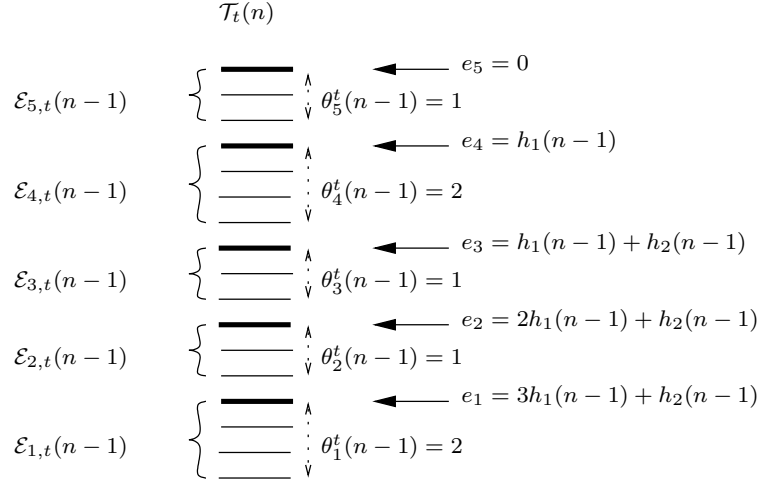}}
\vspace{0.375cm} 
\caption{In the figure we present tower $t$ of $\P(n)$ in a particular 
example. We assume that in  $\P(n-1)$ there are only two towers and 
that $m_t(n)=5$. 
If $x \in \E_{1,t}(n-1)$ then $s_{n-1}(x)=(3,1)^T$ and $l_n(x)=1$. If 
$x \in \E_{4,t}(n-1)$ then $s_{n-1}(x)=(1,0)^T$ and $l_n(x)=4$.
\label{Figure2}}
\end{figure}

A direct computation yields to the following lemma. 
It will be extensively used in the sequel.
Denote by $<\cdot,\cdot>$ the usual scalar product.

\begin{lemma}
\label{le:rn} For all $x\in X$ and all $n\geq 2$ it holds,
\begin{align*}
&r_1(x)=s_0(x); \ r_n(x)=r_{n-1}(x)+ <s_{n-1}(x), H(n-1)> ; \\
&r_n (x) = \sum_{j=2}^{n-1} <s_j (x), P(j) H(1)>+ <s_1(x), H(1)>  + s_0 (x).
\end{align*}
\end{lemma}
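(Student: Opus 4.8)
The plan is to prove Lemma \ref{le:rn} by unwinding the recursive definitions already established, in two stages. First I would establish the base case and the one-step recursion $r_n(x)=r_{n-1}(x)+\langle s_{n-1}(x),H(n-1)\rangle$, and then iterate it to get the closed form.

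\textbf{The one-step recursion.} Fix $x\in X$ and $n\geq 1$. Recall that $r_{n-1}(x)$ and $r_n(x)$ are the first entrance times of $x$ into the roofs $B(n-1)$ and $B(n)$ respectively, and that $B(n)\subseteq B(n-1)$ by {\bf (KR1)}. Hence, starting from $x$, the orbit first hits $B(n-1)$ at time $r_{n-1}(x)$, and then must traverse several towers of level $n-1$ before reaching $B(n)$ at time $r_n(x)$. Between times $r_{n-1}(x)$ and $r_n(x)$ the point $T^{r_{n-1}(x)}x$ climbs through a succession of complete $\P(n-1)$-towers, exactly as encoded by $\theta^{\tau_n(x)}(n-1)$. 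By the very definition of $s_{n-1}(x)=(s_{n-1,t}(x))_{1\leq t\leq C(n-1)}$, the coordinate $s_{n-1,t}(x)$ counts how many times the orbit crosses the $t$-th tower of level $n-1$ in this interval; each such crossing contributes exactly $h_t(n-1)$ to the elapsed time. Summing over $t$ gives
$$
r_n(x)-r_{n-1}(x)=\sum_{t=1}^{C(n-1)} s_{n-1,t}(x)\,h_t(n-1)=\langle s_{n-1}(x),H(n-1)\rangle,
$$
which is the desired recursion. The base case $r_1(x)=s_0(x)$ follows the same way with $n=1$: since $\P(0)$ is trivial with $B(0)=X$, we have $r_0(x)=0$ and $H(0)=(1)$, so $r_1(x)=\langle s_0(x),H(0)\rangle=s_0(x)$ (here $s_0(x)$ is a scalar). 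I would extract the combinatorial identity $r_n(x)-r_{n-1}(x)=\langle s_{n-1}(x),H(n-1)\rangle$ directly from the second description of $s_{n-1,t}(x)$ in terms of the coding $\theta$, together with the formula $e_l=\sum_{k=l+1}^{m_t(n)}h_{\theta_k^t(n-1)}(n-1)$ already derived above; this makes the counting completely explicit.

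\textbf{Iterating to the closed form.} Telescoping the one-step recursion from level $1$ up to level $n$ yields
$$
r_n(x)=r_1(x)+\sum_{j=2}^{n}\langle s_{j-1}(x),H(j-1)\rangle
= s_0(x)+\sum_{j=1}^{n-1}\langle s_j(x),H(j)\rangle.
$$
It now suffices to substitute the identity $H(j)=P(j)H(1)$ for $j\geq 2$, which is recorded in the excerpt as $P(n,m)H(m)=H(n)=P(n)H(1)$, together with $H(1)=P(1)H(1)$ trivially. Separating the $j=1$ term (where $H(1)$ appears directly) from the terms $j\geq 2$ (where $H(j)=P(j)H(1)$) gives
$$
r_n(x)=\sum_{j=2}^{n-1}\langle s_j(x),P(j)H(1)\rangle+\langle s_1(x),H(1)\rangle+s_0(x),
$$
which is exactly the claimed formula.

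\textbf{Main obstacle.} The substitution and telescoping are routine bookkeeping; the only genuine content is the one-step recursion, and the one point that requires care there is justifying that the orbit segment between the first hit of $B(n-1)$ and the first hit of $B(n)$ decomposes \emph{precisely} into full towers of level $n-1$ counted by $s_{n-1}(x)$, with no boundary or off-by-one error. This rests on the nesting of the partitions {\bf (KR2)} and the roof inclusions {\bf (KR1)}, {\bf (KR6)}, which guarantee that once the orbit is in the roof $B(n-1)$ it proceeds through entire $\P(n-1)$-towers until it returns to $B(n)$. I would verify the endpoints carefully using equation \eqref{eq:retour} and the alternative description of $s_{n-1,t}(x)$ via $\theta$, checking that the crossing at which the orbit enters $B(n)$ (the one with $\theta_{m_t(n)}^t(n-1)=1$, by \eqref{rel:thetakr6}) is correctly accounted for; this is where I expect the bulk of the attention to go, even though the final identity is clean.
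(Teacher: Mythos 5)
Your proof is correct and is essentially the ``direct computation'' the paper leaves to the reader: the one-step identity $r_n(x)-r_{n-1}(x)=\langle s_{n-1}(x),H(n-1)\rangle$ obtained by counting the complete level-$(n-1)$ towers crossed between the first hits of $B(n-1)$ and $B(n)$ (equivalently, via $e_{l_n(x)}=\sum_{k>l_n(x)}h_{\theta^{\tau_n(x)}_k(n-1)}(n-1)$), followed by telescoping and the substitution $H(j)=P(j)H(1)$. No issues.
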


\subsection{Markov property for the towers}
Now we prove the sequence of random variables $(\tau_n; n\in \NN)$
is a non-stationary Markov chain. We need some preliminary
computations. Let $n\geq 1$. From Lemma \ref{important} we have
\begin{align*}
\label{eq:propertiesmu}
\mu(B_{\tau_n(x)}(n))
=
\mu \left(\bigcap_{k=1}^n \E_{l_k(x),\tau_k(x)}(k-1)\right).
\end{align*}

Let $(t_i \in \{1,...,C(i)\}; 0 \leq i \leq n)$. 
The set $[\tau_n=t_n]$ is the tower $\T_{t_n}(n)$. For $0 \leq k < n$, 
$\tau_k(x)$ is constant on each level of $\T_{t_n}(n)$. By a simple induction, 
the number of levels of this tower where 
$\tau_0(x)=t_0,\dots,\tau_{n-1}(x)=t_{n-1}$ is equal to 
$m_{t_1,t_0}(1) \cdots m_{t_n,t_{n-1}}(n)$. In other words, the set 
$[\tau_0=t_0,...,\tau_n=t_n]$ is the union of 
$m_{t_1,t_0}(1) \cdots m_{t_n,t_{n-1}}(n)$ levels of the tower $\T_{t_n}(n)$
and 
\begin{equation}\label{eq:taus}
\mu[\tau_0=t_0,...,\tau_n=t_n]=
m_{t_1,t_0}(1) \cdots m_{t_n,t_{n-1}}(n) \mu(B_{t_n}(n))  \ .
\end{equation}

In particular, from the last equality and the definition of the matrices
$(M(n);n \geq 1)$ we deduce
$$
\mu[\tau_n=t_n | \tau_{n-1}=t_{n-1}]= \frac{m_{t_n,t_{n-1}}(n)
\mu(B_{t_n}(n))}
{\mu(B_{t_{n-1}}(n-1))}.
$$
Now, given the sequence $(\P (n) ; n\in \NN)$ we can prove
$(\tau_n;n\in \NN)$ is
a Markov chain on the probability space $(X,\B_X,\mu)$.
Therefore, by \eqref{rel-mesure}, the matrix
$Q(n) = (q_{t,\bar t} (n) ; 1\leq \bar t \leq C(n) ,
1\leq t \leq C(n-1) )$ with
$$
q_{t,\bar t}(n)=
\frac{ m_{\bar t,t}(n) \mu(B_{\bar t}(n)) }{\mu(B_{t}(n-1))}
$$
is a stochastic matrix.

\begin{lemma}
The sequence of random variables $(\tau_n;n\in \NN)$ is a non-stationary
Markov chain with associated stochastic matrices  $(Q(n) ; n \geq 1)$.
\end{lemma}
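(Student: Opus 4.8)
The plan is to read off the Markov property directly from the product formula \eqref{eq:taus}, in which the joint law of $(\tau_0,\ldots,\tau_n)$ already appears as a product of matrix coefficients along the path times the measure of the terminal roof. First I would record the positivity needed to condition: by {\bf (KR5)} each $M(n)$ has strictly positive entries, and the unique invariant measure of a minimal Cantor system gives positive mass to every nonempty clopen set, so $\mu(B_t(n))>0$ for all $n,t$. Consequently every cylinder $\mu[\tau_0=t_0,\ldots,\tau_k=t_k]$ is strictly positive, and all the conditional probabilities below are well defined.

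The heart of the argument is a telescoping cancellation. For an admissible sequence $(t_i;\,0\le i\le n)$ I would write
$$
\mu[\tau_n=t_n\mid\tau_0=t_0,\ldots,\tau_{n-1}=t_{n-1}]
=\frac{\mu[\tau_0=t_0,\ldots,\tau_n=t_n]}
{\mu[\tau_0=t_0,\ldots,\tau_{n-1}=t_{n-1}]}
$$
and apply \eqref{eq:taus} to both numerator and denominator, the latter at level $n-1$. The common factor $m_{t_1,t_0}(1)\cdots m_{t_{n-1},t_{n-2}}(n-1)$ cancels, leaving
$$
\mu[\tau_n=t_n\mid\tau_0=t_0,\ldots,\tau_{n-1}=t_{n-1}]
=\frac{m_{t_n,t_{n-1}}(n)\,\mu(B_{t_n}(n))}{\mu(B_{t_{n-1}}(n-1))}
=q_{t_{n-1},t_n}(n).
$$
The decisive point is that this quantity depends on the past only through $t_{n-1}$: that is precisely the Markov property, and the same computation identifies the transition probability with the entry $q_{t_{n-1},t_n}(n)$ of $Q(n)$.

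It then remains only to assemble the pieces. The one-step conditional probability displayed just before the statement coincides with the expression above, so the transition kernel of the chain is indeed $Q(n)$; that $Q(n)$ is a bona fide stochastic matrix has already been established from the measure relation \eqref{rel-mesure}. Equivalently, \eqref{eq:taus} together with the telescoping of the roof measures gives the factorization $\mu[\tau_0=t_0,\ldots,\tau_n=t_n]=\mu[\tau_0=t_0]\prod_{k=1}^{n}q_{t_{k-1},t_k}(k)$, the defining property of a non-stationary Markov chain, once one checks the trivial initial term $\mu(B_{t_0}(0))=\mu[\tau_0=t_0]=1$ coming from the trivial partition $\P(0)$. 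I expect no genuine obstacle here, since all the substance has been front-loaded into \eqref{eq:taus}; the only points demanding care are the positivity that legitimizes conditioning and a clean formulation of what ``non-stationary Markov chain with matrices $(Q(n))$'' is meant to assert.
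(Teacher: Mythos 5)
Your proposal is correct and follows essentially the same route as the paper: both compute the conditional probability as a ratio of cylinder measures via \eqref{eq:taus}, cancel the common product $m_{t_1,t_0}(1)\cdots m_{t_{n-1},t_{n-2}}(n-1)$, and identify the result with $q_{t_{n-1},t_n}(n)$. Your extra remarks on positivity of the cylinders and the product factorization are sound but not needed beyond what the paper's one-line telescoping computation already gives.
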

\begin{proof}
From \eqref{eq:taus} we get
\begin{align*}
&\mu[\tau_{n} = \bar t | \tau_{n-1}=t,\tau_{n-2}=t_{n-2}, \ldots
,\tau_0=t_0 ] \\
&=\frac{ m_{t_1,t_0}(1) \cdots m_{t,t_{n-2}}(n-1) m_{\bar t,t}(n)
\mu(B_{\bar t}(n))}
        {m_{t_1,t_0}(1) \cdots m_{t,t_{n-2}}(n-1) \mu(B_{t}(n-1))}\\
&=\frac{m_{\bar t,t}(n)  \mu(B_{\bar t}(n))}
{\mu(B_{t}(n-1))} \\
&=\mu[\tau_{n}=\bar t | \tau_{n-1}=t]\\
&= q_{t,\bar t}(n).
\end{align*}
\end{proof}
The following lemma provides an exponential mixing property for
non-stationary ergodic Markov chains. It is a standard result. The
proof can be adapted from that of Corollary 2 page 141 of
\cite{Se}. That is, this corollary can be generalized 
to the case of a non-stationary Markov chain where the stochastic 
matrices have not necessarily the same dimension. Alternatively, a direct
proof follows from inequality (3.3) Theorem 3.1 page 81 of \cite{Se} in the 
case of our particular matrices.
 
\begin{lemma}
\label{MixExpo}
Let $(\tau_n;n\in \NN)$ be the non-stationary
Markov chain defined in the previous subsection. There
exist $c\in \RR_+$ and $\beta \in [0,1[ $ such that for all $n,k\in \NN$,
with $k\leq n$,
$$
\sup_{1\leq t\leq C(n-k) , 1\leq \bar t \leq C(n) }|
\mu[\tau_n=\bar t | \tau_{n-k}=t] - \mu[\tau_n=\bar t] | \leq c \beta^{k} \ .
$$

\end{lemma}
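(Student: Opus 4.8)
\emph{Proof proposal.} The plan is to translate the statement into a contraction estimate for products of the stochastic matrices $Q(n)$ and to control this contraction by Dobrushin's coefficient of ergodicity. Recall that for a stochastic matrix $S=(s_{t,\bar t})$ one sets $\delta(S)=1-\min_{t,t'}\sum_{\bar t}\min(s_{t,\bar t},s_{t',\bar t})=\tfrac12\max_{t,t'}\sum_{\bar t}|s_{t,\bar t}-s_{t',\bar t}|$, that $0\le \delta(S)\le 1$, and that $\delta$ is submultiplicative: $\delta(S_1S_2)\le \delta(S_1)\delta(S_2)$. By the Markov property and the definition of $Q(n)$, the $k$-step transition law satisfies $\mu[\tau_n=\bar t\mid \tau_{n-k}=t]=\bigl(Q(n-k+1)\cdots Q(n)\bigr)_{t,\bar t}$, so the whole lemma reduces to bounding $\delta\bigl(Q(n-k+1)\cdots Q(n)\bigr)$.

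The crucial step, and the only place where linear recurrence really enters, is a uniform estimate $\delta(Q(n))\le \rho<1$. First I would observe that \eqref{rel-mesure} gives, for $n\ge 1$, the identity $\mu(B_t(n-1))=\sum_{\bar t}m_{\bar t,t}(n)\,\mu(B_{\bar t}(n))$, so the denominators appearing in $q_{t,\bar t}(n)$ are exactly the column sums of the numerators. Since every entry $m_{\bar t,t}(n)\ge 1$ by {\bf (KR5)}, for any two rows $t,t'$ one has $\min(q_{t,\bar t}(n),q_{t',\bar t}(n))\ge \mu(B_{\bar t}(n))/\max(\mu(B_t(n-1)),\mu(B_{t'}(n-1)))$, and summing over $\bar t$ yields $\sum_{\bar t}\min(q_{t,\bar t}(n),q_{t',\bar t}(n))\ge \mu(B(n))/\mu(B(n-1))$. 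Finally the linear recurrence bounds $m_{\bar t,t}(n)\le L$ and $C(n-1)\le L$ give, again through \eqref{rel-mesure}, $\mu(B(n-1))\le L^2\,\mu(B(n))$; hence the last ratio is at least $1/L^2$ and $\delta(Q(n))\le 1-1/L^2=:\rho$ for every $n$.

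It then remains to assemble the pieces. Submultiplicativity gives $\delta\bigl(Q(n-k+1)\cdots Q(n)\bigr)\le \rho^{k}$. Writing $R=Q(n-k+1)\cdots Q(n)$ and denoting by $\pi$ the law of $\tau_{n-k}$, the marginal $\mu[\tau_n=\bar t]=\sum_t\pi(t)R_{t,\bar t}$ is a convex combination of the rows of $R$, so $|R_{t_0,\bar t}-\mu[\tau_n=\bar t]|\le \max_{t,t'}|R_{t,\bar t}-R_{t',\bar t}|$; and for a single entry one has $|R_{t,\bar t}-R_{t',\bar t}|\le \tfrac12\sum_{\bar s}|R_{t,\bar s}-R_{t',\bar s}|\le \delta(R)$. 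Combining these inequalities gives the claim with $c=1$ and $\beta=\rho$. The only genuine difficulty is the uniform bound $\delta(Q(n))\le\rho$: the boundedness away from $0$ and from $\infty$ of the quantities involved is exactly what linear recurrence (finiteness of $\{M(n)\}$ and $C(n)\le L$) provides, and without it the coefficients of ergodicity could tend to $1$ and the geometric rate would be lost. The alternative route through Corollary~2 p.~141 of \cite{Se} amounts to the same uniform estimate.
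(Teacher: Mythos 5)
Your proof is correct, and it follows essentially the same route as the paper, which does not write out an argument but simply delegates to the ergodicity-coefficient results of Seneta (Corollary 2, p.~141, or Theorem 3.1, p.~81, of \cite{Se}) --- exactly the Dobrushin-coefficient machinery you carry out explicitly. Your write-up in fact supplies the details the paper omits, in particular the uniform lower bound $\sum_{\bar t}\min(q_{t,\bar t}(n),q_{t',\bar t}(n))\ge \mu(B(n))/\mu(B(n-1))\ge 1/L^2$ coming from {\bf (KR5)}, {\bf (LR)} and \eqref{rel-mesure}, which is the only point where the specific structure of the system is used.
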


\section{Measurable eigenvalues}

\label{sconditions}

The main purpose of this section is to prove Statement
\eqref{measurable}  of Theorem \ref{ssi} (this
is done in Subsection \ref{mainssi}).
In the first subsection we give a general necessary and sufficient
condition
to be a measurable eigenfunction of a minimal Cantor system.

\subsection{A necessary and sufficient condition to be an eigenvalue}

We give a general necessary and sufficient condition to be an
eigenvalue. We do not use it directly to prove our result, but we
think it gives an idea of the classical way to tackle the problem
and shows that the difficulty relies in understanding the
stochastic behavior of the sequence $(r_n;n \in \NN)$. We would
like to stress the fact that we still do not have a convincing
interpretation of the sequence of functions $\rho_n$ which appears
in the next theorem.
\begin{theo}
\label{vpns}
Let $(X,T)$ be a minimal Cantor system and let $\mu$ be an invariant
measure. Let $(\P(n) ; n\in\NN )$ be a sequence of CKR partitions verifying
{\bf (KR1)}-{\bf (KR4)}.
A complex number $\lambda = \exp (2i\pi \alpha )$ is an eigenvalue of
$(X,T)$ with respect to $\mu$ if and only if
there exist real functions
$\rho_n : \{1,...,C(n)\} \rightarrow \RR$, $n\in \NN$, such that
\begin{equation}
\label{vp}
\alpha
\left( r_n (x) +  \rho_n \circ \tau_n (x)  \right)
\hbox{ converges } ( {\rm mod} \  \ZZ )
\end{equation}
for $\mu$-almost every $ x\in X$ when  $n$ tends to infinity.
\end{theo}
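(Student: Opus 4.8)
The plan is to characterize when $\lambda = \exp(2i\pi\alpha)$ is an eigenvalue by exploiting the tower structure through the functions $r_n$ and $\tau_n$, which together encode the position of a point inside the partition of level $n$. The key idea is that the atom of $\P(n)$ containing $x$ is $T^{-r_n(x)} B_{\tau_n(x)}(n)$, by Lemma~\ref{important}, and these atoms generate the Borel $\sigma$-algebra by \textbf{(KR4)}. First I would recall the standard fact that $\lambda$ is an eigenvalue if and only if there is a measurable $f$ with $|f|=1$ (by ergodicity the modulus is constant, so we normalize) satisfying $f\circ T = \lambda f$ almost everywhere. The strategy is to approximate any candidate eigenfunction by functions measurable with respect to $\P(n)$, and to translate the eigenfunction equation into a statement about how $\alpha r_n$ behaves along orbits.

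The forward direction: suppose such $f$ exists. Iterating $f\circ T = \lambda f$ gives $f(T^{j}x) = \lambda^{j} f(x)$, so in particular, writing $b_n(x)$ for the base point of the tower containing $x$ (i.e. the unique point of $B_{\tau_n(x)}(n)$ in the orbit segment, reached after $r_n(x)$ iterates), one has $f(x) = \lambda^{-r_n(x)} f(T^{r_n(x)}x) = \lambda^{-r_n(x)} f\circ(\text{roof value})$. Conditioning $f$ on the $\sigma$-algebra generated by $\P(n)$ and using the martingale convergence theorem, $\EE[f \mid \P(n)] \to f$ in $L^2$ and almost everywhere along a subsequence. On each atom $T^{-r_n(x)}B_{\tau_n(x)}(n)$ the conditional expectation depends only on $\tau_n(x)$ up to the factor $\lambda^{-r_n(x)}$; this is where the function $\rho_n$ enters, defined essentially so that $\exp(2i\pi\alpha\rho_n(\tau_n(x)))$ captures the average phase of $f$ over the roof $B_{\tau_n(x)}(n)$. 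One then shows $f(x)/|f| = \lim_n \exp\bigl(2i\pi\alpha(r_n(x)+\rho_n\circ\tau_n(x))\bigr)$ almost everywhere, forcing the convergence mod $\ZZ$ in \eqref{vp}.

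The reverse direction: assume the sequence $\alpha(r_n(x)+\rho_n\circ\tau_n(x))$ converges mod $\ZZ$ almost everywhere. I would define
$$
f(x) = \lim_{n\to\infty} \exp\bigl(2i\pi\alpha(r_n(x)+\rho_n\circ\tau_n(x))\bigr),
$$
which exists almost everywhere by hypothesis and has modulus $1$. To verify $f\circ T = \lambda f$, I use the cocycle relation \eqref{eq:retour} for $r_n$: for $x\notin B(n)$ one has $r_n(Tx) = r_n(x)-1$ and $\tau_n(Tx)=\tau_n(x)$, so $\exp(2i\pi\alpha r_n(Tx)) = \lambda^{-1}\exp(2i\pi\alpha r_n(x))$ with the $\rho_n$-term unchanged; since almost every $x$ eventually leaves $B(n)$ as $n\to\infty$ (because $\bigcap_n B(n)$ is a single point by \textbf{(KR3)}), the roof-crossing cases become negligible in the limit and the functional equation $f(Tx)=\lambda f(x)$ survives the passage to the limit.

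The main obstacle is the precise construction and control of $\rho_n$ in both directions, and making the martingale/limit arguments rigorous while handling the measure-zero set of boundary cases (points in $B(n)$ for infinitely many $n$) and the subsequences produced by almost-everywhere convergence. The delicate point is showing that the phase ambiguity coming from the conditional expectation over each roof can be absorbed exactly into a \emph{real} function $\rho_n$ of the finite variable $\tau_n$ alone, rather than a genuinely $x$-dependent correction; this is what makes the criterion clean, and it is precisely the step for which, as the authors remark, a fully transparent interpretation is lacking.
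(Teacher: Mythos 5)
Your outline follows essentially the same route as the paper: for necessity, factor the eigenfunction as $\lambda^{-r_n}\psi_n$ with $\psi_n$ having $\P(n)$-conditional expectation constant on each tower (equal to the roof average of the eigenfunction, whose phase defines $\rho_n$), and invoke martingale convergence; for sufficiency, apply the cocycle relation \eqref{eq:retour} to the limit function off the roofs. Two minor slips that do not affect the argument: Doob's theorem gives a.e. convergence of $\EE_\mu(\,\cdot\mid\P(n))$ along the full sequence (no subsequence is needed, and the full sequence is what the statement requires), and the limit relation off the roofs reads $f\circ T=\lambda^{-1}f$, so one concludes by passing to the conjugate eigenfunction, exactly as the paper implicitly does.
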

\begin{proof}
Let $\lambda=\exp (2i\pi \alpha )$ be a complex number of modulus
1 such that (\ref{vp}) holds and let $g$ be the corresponding limit function.
Consider $x\not\in \displaystyle\cap_{n\in \NN} B(n)$, so
$x$ does not belong to $B(n)$ for all large enough $n \in \NN$.
Then, from \eqref{eq:retour} we get
\begin{equation*}
\frac{\exp (2i\pi g(Tx))}{\exp (2i\pi g (x))}
=
\lim_{n\rightarrow \infty} \lambda^{r_n (Tx) - r_n (x) }
=
\lambda^{-1}.
\end{equation*}
This implies $\lambda$ is an eigenvalue of $(X,T)$ with respect to
$\mu$.
\medskip

Now, assume $\lambda$ is an eigenvalue of $(X,T)$ with respect to
$\mu$ and let $g\in L^2(X,\B_X,\mu)$ be an associated
eigenfunction. For all $n\in \NN$ let $\phi_n =
\lambda^{-r_n}$ and  $\psi_n  = g  / \phi_n$.
The map
$\phi_n $ is $\P(n)$-measurable and bounded, then
$$
\phi_n \EE_\mu(\psi_n | \P(n) ) = \EE_\mu(\phi_n \psi_n | \P(n))=
\EE_\mu (g | \P(n) ) \xrightarrow[n\to \infty]{} g
$$
$\mu$-almost everywhere. Since
$\psi_n \circ T^{-j} / \psi_n = \lambda^{r_n\circ T^{-j}-r_n-j}$,
the restriction of $\psi_n$ to each tower of level $n$ is invariant under $T$. Thus
$\EE_\mu(\psi_n | \P(n))$ is constant on each of these towers and is
therefore equal to the average of $\psi_n$ on
each tower.

To finish, for $1 \leq i \leq C(n)$ we define $\rho_n (i)$ such that
$$\hbox{Arg}\lambda^{-\rho_n (i)} =
\hbox{Arg} \left(\frac{1}{\mu ( B_{i}(n))} \int_{B_{i}(n)}
\psi_n \d \mu \right).$$
This ends the proof.
\end{proof}

\begin{remark}
The same proof works if we remove the Cantor and clopen hypotheses.
\end{remark}

\subsection{Eigenvalues of linearly recurrent systems}
\label{mainssi}
In this subsection we prove Statement \eqref{measurable} of
Theorem \ref{ssi}.
Recall $(X,T)$ is linearly recurrent and $\mu$ is the
unique invariant measure.  Let
$(\P (n) ; n \geq 0 )$ be a sequence of CKR partitions such that
{\bf (KR1)}-{\bf (KR6)} and {\bf (LR)} with constant $L$ are satisfied.
Let $(M(n) ; n\geq 1 )$ be the associated
sequence of matrices.

We will need the following lemma. 
Its proof can be found in \cite{CDHM}.
\begin{lemma}
\label{tendto0} Let $u \in \RR^{C(1)}$ be a real vector such that
$\vvert P(n)u\vvert\to 0$ as $n\to\infty$. Then, there exist $m \geq 2
$, an integer vector $w \in \ZZ^{C(m)}$ and a real vector $v
\in \RR^{C(m)}$ with
$$
P(m)u=v + w\text{ and } \Vert P(n,m)v\Vert\to 0\text{ as
}n\to\infty\ .
$$
\end{lemma}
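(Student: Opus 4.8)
The plan is to exploit the identities $P(n)=P(n,m)P(m)$ and $P(n+1)=M(n+1)P(n)$ together with the fact that, for a linearly recurrent system, the set of matrices $\{M(n);n\geq 1\}$ is finite, so that there is $K<\infty$ with $\Vert M(n)x\Vert\leq K\Vert x\Vert$ for all $n\geq 1$ and all $x$. For each $n\geq 2$ let $b_n\in\ZZ^{C(n)}$ be a nearest integer vector to $P(n)u$ and set $\epsilon_n=P(n)u-b_n$, so that $\Vert\epsilon_n\Vert=\vvert P(n)u\vvert\to 0$ by hypothesis. The whole point of the lemma is to upgrade the convergence of the distances to the integers, $\vvert P(n)u\vvert\to 0$, into genuine convergence to $0$ of the vectors themselves, once an integer vector has been subtracted, once and for all, at a well chosen level $m$.

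First I would track how the integer parts $b_n$ evolve. Applying $P(n+1)=M(n+1)P(n)$ to the decomposition $P(n)u=b_n+\epsilon_n$ gives
\begin{equation*}
b_{n+1}+\epsilon_{n+1}=M(n+1)b_n+M(n+1)\epsilon_n,
\end{equation*}
so that $b_{n+1}-M(n+1)b_n=M(n+1)\epsilon_n-\epsilon_{n+1}$. The left-hand side is an integer vector, while the right-hand side has norm at most $K\Vert\epsilon_n\Vert+\Vert\epsilon_{n+1}\Vert$, which tends to $0$. Hence there is $N$ such that for every $n\geq N$ this integer vector has norm strictly less than $1$, forcing
\begin{equation*}
b_{n+1}=M(n+1)b_n\qquad(n\geq N).
\end{equation*}
This is the crucial step, and the only place where linear recurrence is used: it is precisely the uniform bound $K$ on the matrices that turns the ``small integer vector'' into the zero vector.

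Then I would fix $m=\max(N,2)$, and set $w=b_m\in\ZZ^{C(m)}$ and $v=\epsilon_m=P(m)u-b_m\in\RR^{C(m)}$, so that $P(m)u=v+w$ as required. Iterating the exact recursion $b_{n+1}=M(n+1)b_n$ from level $m$ upwards yields $b_n=P(n,m)b_m=P(n,m)w$ for all $n\geq m$. Consequently
\begin{equation*}
P(n,m)v=P(n,m)\bigl(P(m)u-w\bigr)=P(n)u-P(n,m)w=P(n)u-b_n=\epsilon_n,
\end{equation*}
and therefore $\Vert P(n,m)v\Vert=\Vert\epsilon_n\Vert\to 0$, which is exactly the conclusion.

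The argument is essentially complete once the eventual exactness $b_{n+1}=M(n+1)b_n$ is established; I expect that step to be the only real obstacle. The minor points to watch are the (harmless) non-uniqueness of the nearest integer vector when a coordinate of $P(n)u$ is a half-integer, which is resolved by fixing any consistent choice, and ensuring $m\geq 2$, which is automatic since the recursion holds for all $n\geq N$ and hence for all $n\geq\max(N,2)$.
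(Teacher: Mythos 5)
Your proof is correct. The paper itself does not reprove this lemma (it only cites \cite{CDHM}), but your argument --- decompose $P(n)u=b_n+\epsilon_n$ into integer and fractional parts, use the finiteness of $\{M(n);n\geq 1\}$ to force $b_{n+1}=M(n+1)b_n$ for large $n$, and then read off $P(n,m)v=\epsilon_n$ --- is precisely the standard argument given in that reference, with all the relevant details (uniform bound on the matrices, the integer-vector-of-norm-less-than-one step, and the choice $m\geq 2$) handled correctly.
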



%

Assume the following condition holds:
\begin{equation}
\label{conditioncarre}
\displaystyle \sum_{n\ge 2} \vvert \alpha P(n)  H(1) \vvert^2
<\infty \ .
\end{equation}
Then, $\vvert P(n)(\alpha H(1)) \vvert \to 0$ as
$n \to \infty$.
From Lemma \ref{tendto0} there exist an integer $n_0\geq 2$, a real vector
$v\in \RR^{C(n_0)}$ and an
integer vector $w \in \ZZ^{C(n_0)}$
such that, $P(n_0)(\alpha H(1)) = v+ w $ and
$P(n,n_0) v \to 0$ as $n \to \infty$.
By modifying a finite number of towers, if needed, we can assume
without loss of generality that $n_0=1$ and that $H(1)=(1,...,1)^T$.
So condition \eqref{conditioncarre} implies
\begin{equation}
\displaystyle \sum_{n\ge 2} \Vert P(n) v \Vert^2
<\infty \ .
\end{equation}
\medskip
From \eqref{KR}, we can also assume without loss of generality
that {\bf (KR5')} holds. 
That is, entries of matrices $M(n)$ are larger than
$2$ for all $n\geq 2$.

For $n\geq 1$ we define $g_n : X \rightarrow \RR$ by
$$
g_n(x) = s_0(x) + <s_1(x), v>+ \sum_{j=2}^{n-1}  <s_j(x), P(j) v>  \  .
$$
Since we are assuming $H(1)=(1,...,1)^T$, then $s_0=0$ and
$$g_n(x) = \sum_{j=1}^{n-1}  <s_j(x), P(j) v> \ ,$$
where we set $P(1)=Id$.
\begin{lemma}
\label{convergence}
If (\ref{conditioncarre}) holds, then the sequence
$(f_n = g_n  - \EE_\mu (g_n) ; n\geq 1)$ converges in $L^2(X,\B_X,\mu)$.
\end{lemma}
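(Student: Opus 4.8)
The plan is to show that $(f_n)$ is a Cauchy sequence in $L^2(X,\B_X,\mu)$. Writing $d_j = <s_j, P(j)v> - \EE_\mu(<s_j,P(j)v>)$ for the centered increments, we have $f_n = \sum_{j=1}^{n-1} d_j$ (recall $s_0=0$ and $P(1)=Id$), so for $m<n$,
\[
\Vert f_n - f_m\Vert_2^2 = \sum_{j=m}^{n-1} \EE_\mu(d_j^2) + 2\sum_{m\le j<k\le n-1}\EE_\mu(d_j d_k).
\]
It therefore suffices to establish two estimates: a diagonal bound $\EE_\mu(d_j^2)\le K^2\Vert P(j)v\Vert^2$ and an off-diagonal bound $|\EE_\mu(d_jd_k)|\le C\beta^{k-j}\Vert P(j)v\Vert\,\Vert P(k)v\Vert$ for $j<k$, with $\beta\in[0,1[$ the mixing rate of Lemma \ref{MixExpo}. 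Once these hold, the hypothesis $\sum_j\Vert P(j)v\Vert^2<\infty$ (which is \eqref{conditioncarre} after the reductions) will make both the diagonal tail and, via $2ab\le a^2+b^2$ and a geometric summation over the gap $k-j$, the off-diagonal tail vanish as $m\to\infty$.

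For the diagonal bound I would first observe that $s_j$ is uniformly bounded. Indeed, by {\bf (LR)} every entry of $M(n)$ is at most $L$ and $C(n)\le L$, so $\sum_t s_{j,t}(x)\le m_{\tau_{j+1}(x)}(j+1)\le L^2$ for all $x$. Since $\Vert P(j)v\Vert=\max_t|(P(j)v)_t|$, this gives $|<s_j(x),P(j)v>|\le L^2\Vert P(j)v\Vert$ and hence $|d_j|\le 2L^2\Vert P(j)v\Vert$ pointwise, yielding the diagonal estimate with $K=2L^2$.

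The off-diagonal estimate is the heart of the argument, and the main obstacle is that $d_j$ depends on $x$ through the pair $(\tau_{j+1}(x),l_{j+1}(x))$, not through the Markov chain $(\tau_n)$ alone. To remove the dependence on the $l$'s I would condition on the $\sigma$-algebra $\G=\sigma(\tau_n;n\ge0)$: by the product description of $X$ in Lemma \ref{important} together with the equal-measure formula \eqref{eq:taus}, given $\G$ the variables $(l_n)$ are conditionally independent, each uniform over its admissible range $\{l:\theta_l^{\tau_n}(n-1)=\tau_{n-1}\}$, which depends only on $(\tau_{n-1},\tau_n)$. Consequently $D_j:=\EE_\mu(d_j\mid\G)$ is a centered function of $(\tau_j,\tau_{j+1})$ with $|D_j|\le 2L^2\Vert P(j)v\Vert$, and since $l_{j+1}$ and $l_{k+1}$ are conditionally independent for $j<k$ we get $\EE_\mu(d_jd_k)=\EE_\mu(D_jD_k)=\EE_\mu\!\big(D_j\,\EE_\mu(D_k\mid\tau_{j+1})\big)$, using that $D_j$ is $(\tau_j,\tau_{j+1})$-measurable and the forward Markov property. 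As $D_k$ is a function of $(\tau_k,\tau_{k+1})$ with $k\ge j+1$ and $\EE_\mu(D_k)=0$, expanding $\EE_\mu(D_k\mid\tau_{j+1})-\EE_\mu(D_k)$ through the one-step transitions and estimating $\sup_a|\mu[\tau_k=a\mid\tau_{j+1}=\cdot]-\mu[\tau_k=a]|\le c\beta^{k-j-1}$ by Lemma \ref{MixExpo} gives, with at most $L$ states and $|D_k|\le 2L^2\Vert P(k)v\Vert$, that $\Vert\EE_\mu(D_k\mid\tau_{j+1})\Vert_\infty\le C'\beta^{k-j}\Vert P(k)v\Vert$. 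Hence $|\EE_\mu(D_jD_k)|\le\EE_\mu|D_j|\cdot C'\beta^{k-j}\Vert P(k)v\Vert\le C\beta^{k-j}\Vert P(j)v\Vert\,\Vert P(k)v\Vert$, the adjacent case $k=j+1$ being absorbed into the constant via Cauchy--Schwarz.

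Assembling, I would bound $\sum_{m\le j<k}|\EE_\mu(d_jd_k)|\le C\sum_{m\le j<k}\beta^{k-j}\tfrac12(\Vert P(j)v\Vert^2+\Vert P(k)v\Vert^2)\le \tfrac{C}{1-\beta}\sum_{j\ge m}\Vert P(j)v\Vert^2$, which together with the diagonal tail $\sum_{j\ge m}\EE_\mu(d_j^2)\le K^2\sum_{j\ge m}\Vert P(j)v\Vert^2$ tends to $0$ as $m\to\infty$. Thus $(f_n)$ is Cauchy and converges in $L^2(X,\B_X,\mu)$. The step I expect to be delicate is precisely the reduction $\EE_\mu(d_jd_k)=\EE_\mu(D_jD_k)$ with $D_j$ a function of $(\tau_j,\tau_{j+1})$: it hinges on the conditional independence of the $(l_n)$ given $\G$, which must be extracted carefully from Lemma \ref{important} and \eqref{eq:taus} before the mixing of Lemma \ref{MixExpo} can be brought to bear.
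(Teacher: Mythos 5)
Your proof is correct, and it reaches the same two quantitative inputs as the paper (the exponential mixing of Lemma \ref{MixExpo} and the square-summability of $\Vert P(n)v\Vert$, combined through the same geometric summation $\sum_{j,k}\beta^{|k-j|}\kappa_j\kappa_k\leq\frac{C}{1-\beta}\sum_j\kappa_j^2$), but it handles the key obstacle --- the dependence of $<s_n,P(n)v>$ on $l_{n+1}$ and not just on the chain $(\tau_n)$ --- by a genuinely different device. The paper splits $X_n=Y_n+Z_n$ with $Y_n=\EE_\mu(X_n|\P(n))$: the remainder $Z_n$ forms an orthogonal series in the increasing filtration $(\P(n))$ (only the identity $\EE_\mu(Z_n|\P(n))=0$ is needed, no independence statement), and the computation \eqref{mesure-preuve} shows $Y_n$ is a function of $\tau_n$ alone, so the mixing lemma applies in one line. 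You instead condition on the whole trajectory $\G=\sigma(\tau_n;n\geq 0)$ and use the conditional independence and uniformity of the $(l_n)$ given $\G$ --- which does follow from \eqref{eq:atom} and \eqref{eq:taus}, since the atoms $[\tau_0=t_0,\dots,\tau_N=t_N]$ split into equal-measure cells indexed by the product set $\prod_k\{j:\theta_j^{t_k}(k-1)=t_{k-1}\}$ --- to replace $d_j$ by $D_j=\EE_\mu(d_j|\G)$, a function of the pair $(\tau_j,\tau_{j+1})$, inside every covariance. What your route buys is a single uniform bound $|\EE_\mu(d_jd_k)|\leq C\beta^{k-j}\Vert P(j)v\Vert\,\Vert P(k)v\Vert$ with no case split; what it costs is having to formalize the conditional independence of the $(l_n)$ (which the paper never needs) and a slightly more delicate use of the Markov property, since $D_k$ depends on two consecutive states and one must condition at time $j+1$ with gap $k-j-1$ --- both of which you do correctly, including the adjacent case $k=j+1$. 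If you write this up, the conditional-independence step deserves a short explicit lemma, as you anticipate.
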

\begin{proof}
Let $n \geq 1$.  Recall that $\P(n)$ is the partition of level $n$ and
let $\T(n)$ be the coarser partition $\{\T_j(n); 1 \leq j \leq C(n) \}$.
As usual we identify the 
finite partitions with the $\sigma$-algebras they span and
we use the same notation. Thus $\T(n)$ is the $\sigma$-algebra spanned
by the random variable $\tau_n$.

Let $X_n$ be the random variable given by
$$X_n=<s_n, P(n) v> - \EE_\mu (<s_n, P(n) v>)\ .$$ 
We decompose it as $X_n=Y_n+Z_n$ where
$$Y_n=\EE_\mu (X_n|\P(n)) \text{ and }
Z_n=<s_n, P(n) v>-\EE_\mu (<s_n, P(n) v>|\P(n))\ . $$

We write $\kappa_n=\Vert P(n) v \Vert$. 
Observe that for some positive constant $K$ and all
$n\geq 1$ we have 
$|X_n| \leq K \kappa_n$, $|Y_n| \leq K \kappa_n$
and $|Z_n| \leq K \kappa_n$. 

First we show that the series $\sum Z_n$ converges. Let $m$ and 
$n$ be positive integers with $m < n$. The random variable $Z_m$ is measurable 
with respect to $\P(m+1)$, thus also with respect to $\P(n)$. Since 
$\EE_\mu(Z_n|\P(n))=0$ we get $\EE_\mu(Z_m \cdot Z_n)=0$. 
As $|Z_n|\leq K \kappa_n$ for every $n \geq 1$, the series
$\sum \EE_\mu(Z_n^2)$ converges, and thus the orthogonal series $\sum Z_n$ 
converges in $L^2(X,\B_X,\mu)$.

Now we prove that the series $\sum Y_n$ 
converges in $L^2(X,\B_X,\mu)$.  Fix $j\geq 1$ and $1\leq \bar t \le C(n+1)$.
The set $\E_{j,\bar t}(n)$ is included in the tower $\T_t(n)$ where 
$t=\theta_j^{\bar t}(n)$. Moreover, the intersection of all levels 
of $\T_t(n)$ with $\E_{j,\bar t}(n)$ are levels of $\T_{\bar t}(n+1)$ 
(see Figure \ref{Figure2}) and thus have the same measure 
$\mu(B_{\bar t}(n+1))$. As each level of the tower $\T_t(n)$ has measure 
$\mu (B_t(n))$ we have

\begin{equation}\label{mesure-preuve}
\mu(\E_{j,\bar t}(n)|\P(n))(x)=
\left\{
\begin{array}{ll}
\frac{\mu(B_{\bar t}(n+1))}{\mu(B_t(n))} & \text{ if } x \in \T_t(n) \\
0 & \text{ otherwise. }
\end{array} \right.
\end{equation}

Observe that this conditional probability is constant in each 
atom of $\T(n)$ and thus 
$$\mu(\E_{j,\bar t}(n)|\P(n))=\mu(\E_{j,\bar t}(n)|\T(n)) \ .$$
As $s_n$ is constant on each set $\E_{j,\bar t}(n)$, the same 
property holds for $X_n$ and thus 

\begin{equation}\label{mesure-preuve2}
Y_n=\EE_\mu(X_n | \P(n))=\EE_\mu(X_n | \T(n)) \ .
\end{equation}

In particular $Y_n$ is equal to a constant on each set 
$[\tau_n=\bar t \ ]$ and we write $y_{\bar t}$ for this constant.
Fix $k$ with $0\leq k \leq n$. If $\tau_{n-k}(x)=t$ we have
$$
\EE_\mu(Y_n|\T(n-k))(x)=\sum_{\bar t=1}^{C(n)} 
\mu[\tau_n=\bar t \ | \tau_{n-k}=t] y_{\bar t} \ .
$$
We have 
$$
\sum_{\bar t=1}^{C(n)}\mu[\tau_n=\bar t\ ]y_{\bar t}=\EE_\mu(Y_n)=0 \ .
$$
We deduce from
Lemma \eqref{MixExpo}, the 
fact that $C(n)$ is bounded independently of $n$ and 
$|Y_n| \leq  K \kappa_n$ that for some positive constant $C$ 

$$|\EE_\mu(Y_n|\T(n-k))(x)|\leq 
\sum_{\bar t=1}^{C(n)} |\mu[\tau_n=\bar t| \tau_{n-k}=t]-\mu[\tau_n=\bar t]|
y_{\bar t} \leq C \beta^k \kappa_n \ .
$$
As $Y_{n-k}$ is measurable with respect to $\T(n-k)$ we have

\begin{equation*}
|\EE_\mu(Y_n\cdot Y_{n-k})| \leq C \beta^k \kappa_n \EE_\mu(|Y_{n-k}|) 
\leq C \beta^k \kappa_n \kappa_{n-k} \ .
\end{equation*}

For $1\leq m < n$ we compute 
\begin{eqnarray*}
\EE_\mu\left (\left (\sum_{k=m}^n Y_k \right )^2 \right ) 
&=& \sum_{m\leq j,l\leq n} \EE_\mu
(Y_j \cdot Y_l)\leq C \sum_{m\leq j,l\leq n} \beta^{j-l}\kappa_j\kappa_l \\
&=& C \sum_{r=0}^{n-m} \beta^r \sum_{l=m}^{n-r} \kappa_l \kappa_{l+r} 
\leq C \sum_{r=0}^{n-m} \beta^r \sum_{l=m}^{n} \kappa_l^2 \\
&\leq&  \frac{C}{1-\beta} \sum_{l=m}^{n} \kappa_l^2 \ .
\end{eqnarray*}

Since the series $\sum \kappa^2_j$ converges, the partial sums of the series 
$\sum Y_n$ form a Cauchy sequence in $L^2(X,\B_X,\mu)$.
\end{proof} 


The following lemma completes the proof of 
Theorem \ref{ssi} \eqref{measurable}.

\begin{lemma}
Let $f\in L^2(X,\B_X,\mu)$ be the limit of sequence $(f_n ; n\geq 1)$. The
function $\exp ( 2i \pi f )$
is an eigenfunction of $(X,T)$ with respect to $\mu$
associated to the eigenvalue $ \exp (2i\pi \alpha )$.
\end{lemma}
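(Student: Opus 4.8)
The plan is to show that $\exp(2i\pi f)$ satisfies the eigenfunction equation $(\exp(2i\pi f))\circ T = \exp(2i\pi\alpha)^{-1}\exp(2i\pi f)$ almost everywhere, by invoking Theorem \ref{vpns}. The natural strategy is to recognize $g_n$ as essentially $\alpha r_n$ up to the integer vector $w$ extracted in Lemma \ref{tendto0}. Recall that after the reductions we have $H(1)=(1,\dots,1)^T$, $s_0=0$, and $P(n)(\alpha H(1))=v+w$ with $v$ the convergent part; by Lemma \ref{le:rn} we have $r_n(x)=\sum_{j=1}^{n-1}\langle s_j(x), P(j)H(1)\rangle$. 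Multiplying by $\alpha$ and writing $P(j)(\alpha H(1))=\alpha P(j)H(1)$, the difference $\alpha r_n(x) - g_n(x)=\sum_{j=1}^{n-1}\langle s_j(x), \alpha P(j)H(1)-P(j)v\rangle$ should reduce, via $P(j)(\alpha H(1))=P(j,1)(v+w)$-type relations, to an integer-valued quantity plus a controlled remainder. So the first step is to verify that $g_n$ and $\alpha r_n$ differ by an integer combination of the $s_j$ coordinates, so that $\alpha r_n \equiv g_n \pmod{\ZZ}$ along the subsequence where $w$ is integral.

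Second, I would apply Theorem \ref{vpns} with the specific choice $\rho_n\circ\tau_n = f_n - g_n = -\EE_\mu(g_n)$ plus possible tower-dependent corrections. Here is the key point: Lemma \ref{convergence} gives that $f_n=g_n-\EE_\mu(g_n)$ converges in $L^2$ to $f$, hence a subsequence converges $\mu$-almost everywhere. Since $g_n \equiv \alpha r_n \pmod{\ZZ}$ (modulo the bookkeeping of the previous paragraph), we would have
$$
\alpha r_n(x) + (-\EE_\mu(g_n)) \equiv f_n(x) \pmod{\ZZ},
$$
and the right side converges a.e. along the subsequence. The constants $\EE_\mu(g_n)$ play the role of the functions $\rho_n$ (they are constant, hence trivially functions of $\tau_n$); so the convergence of $\alpha(r_n+\rho_n\circ\tau_n)\pmod{\ZZ}$ required by Theorem \ref{vpns} follows. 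By that theorem, $\lambda=\exp(2i\pi\alpha)$ is then an eigenvalue, and tracking the limit function $g$ in its proof identifies $\exp(2i\pi f)$ as the associated eigenfunction up to a multiplicative constant of modulus one.

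The main obstacle I anticipate is the passage from $L^2$-convergence of $f_n$ to the pointwise (mod $\ZZ$) convergence that Theorem \ref{vpns} demands, together with cleanly accounting for the integer vector $w$. Convergence in $L^2$ only yields a.e. convergence along a subsequence, so I must check that working along a subsequence is harmless: Theorem \ref{vpns} asks for convergence mod $\ZZ$ of $\alpha(r_n+\rho_n\circ\tau_n)$, and if $f_n\to f$ in $L^2$ while $g_n-\alpha r_n$ is integer-valued, then $\exp(2i\pi f)$ is still the correct candidate eigenfunction; the cleanest route is to verify directly that $\exp(2i\pi f_n)=\exp(2i\pi g_n)=\exp(2i\pi\alpha r_n)$ (using integrality) and pass to the $L^2$-limit, since $\exp(2i\pi\cdot)$ is Lipschitz so $L^2$-convergence of $f_n$ gives $L^2$-convergence of $\exp(2i\pi f_n)$. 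Then I would compute $(\exp(2i\pi f))\circ T$ using the cocycle relation \eqref{eq:retour} for $r_n(Tx)-r_n(x)$ and take limits, which yields the factor $\lambda^{-1}$ exactly as in the proof of Theorem \ref{vpns}. The remaining care is to confirm that $f$ is not identically zero as an eigenfunction, i.e. that $\exp(2i\pi f)$ is genuinely nonconstant or at least nonzero, which is immediate since it has modulus one.
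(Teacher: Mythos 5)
Your proposal is correct and, once you discard the detour through Theorem \ref{vpns} and settle on the ``cleanest route'' you describe at the end, it is essentially the paper's own argument: identify $g_n$ with $\alpha r_n$ modulo $\ZZ$ via the integrality of $w$, use the cocycle relation \eqref{eq:retour} to get $f_n\circ T=f_n-\alpha \pmod{\ZZ}$ off the roof $B(n)$ whose measure tends to $0$, and pass to the limit using the $L^2$-convergence of Lemma \ref{convergence}. The one small slip is the identity $\exp(2i\pi f_n)=\exp(2i\pi g_n)$, which is off by the constant $\exp(-2i\pi\EE_\mu(g_n))$ since $\EE_\mu(g_n)$ need not be an integer; this is harmless because the constant cancels in the ratio $\exp(2i\pi f_n(Tx))/\exp(2i\pi f_n(x))$, which is all the cocycle computation uses.
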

\begin{proof}
Remark that $g_n (x) = \alpha r_{n-1} (x) \ (\text{\rm mod} \ \ZZ)$. From relation
 \eqref{eq:retour} we get
$$
f_n (Tx) = f_n (x) -\alpha  \ (\text{\rm mod}
\ \ZZ)
$$
holds outside of the roof $B(n)$ and $\mu(B(n))\to 0$ as $n\to \infty$.
We conclude using Lemma \ref{convergence}.
\end{proof}

\section{Continuous Eigenvalues of Linearly Recurrent Systems}

Let $(X,T)$ be a linearly recurrent dynamical system with constant $L$. The
main purpose of this section is to prove the necessary condition
in the statement \eqref{continuous} of Theorem \ref{ssi}.
We recall the sufficient condition was proven in \cite{CDHM}.

\subsection{A necessary and sufficient condition to be a continuous eigenvalue}

In this subsection we only assume that
$(\P(n); n\in\NN )$ is a sequence of CKR partitions describing the system
$(X,T)$ which satisfies {\bf (KR1)}-{\bf (KR6)}. We give a general
necessary and sufficient condition to be a continuous eigenvalue.

\begin{prop}
\label{condcont}
Let $\lambda=\exp(2i\pi\alpha)$
be a complex number of modulus 1. The following conditions
are equivalent,

\begin{enumerate}
\item
$\lambda$ is a continuous eigenvalue of the minimal Cantor system $(X, T)$;
\item
$(\lambda^{r_n (x)} ; n\geq 1)$ converges uniformly in $x$, i.e., the
sequence $(\alpha r_n(x) ; n\geq 1)$ converges
$({\rm mod} \ \ZZ )$ uniformly in $x$.
\end{enumerate}
\end{prop}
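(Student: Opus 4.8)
The plan is to prove the two implications separately, exploiting relation \eqref{eq:retour}, which governs how $r_n$ changes under one application of $T$, together with the structural fact (from {\bf (KR3)}) that $\bigcap_{n} B(n)$ is a single point and (from {\bf (KR4)}) that the partitions generate the topology. The key observation driving both directions is that $\lambda^{r_n}$ behaves almost like a coboundary for the eigenvalue equation: outside the roof $B(n)$ one has $r_n(Tx)-r_n(x)=-1$, so $\lambda^{r_n(Tx)}=\lambda^{-1}\lambda^{r_n(x)}$ there, and the only obstruction to exactness is concentrated on $B(n)$, whose measure and diameter shrink as $n$ grows.

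For the implication $(2)\Rightarrow(1)$, I would suppose that $\lambda^{r_n(x)}$ converges uniformly in $x$ and let $f(x)=\lim_{n} \lambda^{r_n(x)}$ be the limiting function. Uniform convergence of continuous functions ($r_n$ is continuous by minimality and clopenness of $B(n)$, as noted after the definition of $r_n$) gives that $f$ is continuous and nonvanishing (each $\lambda^{r_n}$ has modulus $1$). It remains to check $f\circ T=\lambda^{-1}f$, or equivalently that $f$ is an eigenfunction for $\lambda$ after adjusting the sign of the exponent. Fix $x$; for all sufficiently large $n$ the point $x$ lies outside $B(n)$ (since $\bigcap_n B(n)$ is a single point whose forward orbit can be handled separately, or simply because $x\notin B(n)$ eventually for $x$ outside that point), and then $\lambda^{r_n(Tx)}=\lambda^{-1}\lambda^{r_n(x)}$. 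Passing to the limit yields $f(Tx)=\lambda^{-1}f(x)$ for every such $x$, and by continuity of $f$ and $T$ this extends to all of $X$. Thus $\bar\lambda=\lambda^{-1}$, equivalently $\lambda$, is a continuous eigenvalue; a harmless replacement of $f$ by $\overline{f}$ or adjustment of the argument gives $f\circ T=\lambda f$.

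For the converse $(1)\Rightarrow(2)$, I would start from a continuous eigenfunction $f$ with $f\circ T=\lambda f$ and $|f|\equiv 1$ (constant modulus, which we may normalize to $1$). The idea is to compare $f$ with $\lambda^{-r_n}$: on each tower the combinatorics of \eqref{eq:retour} force $f(x)\lambda^{r_n(x)}$ to be constant along the $T$-orbit within the tower, so it depends only on which tower $x$ belongs to and on the base point. Concretely, for $x$ in tower $\T_k(n)$ write $x=T^{-j}y$ with $y\in B_k(n)$ and $0\le j<h_k(n)$; then $r_n(x)=j$ and $f(x)=\lambda^{-j}f(y)=\lambda^{-r_n(x)}f(y)$, so $\lambda^{r_n(x)}f(x)=f(y)$ equals the value of $f$ at the base of the tower. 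Hence $\lambda^{r_n(x)}=f(y)/f(x)$, and as $n\to\infty$ the base point $y$ and the point $x$ both lie in a single atom of $\P(n)$ whose diameter tends to $0$ uniformly by {\bf (KR4)}. Uniform continuity of $f$ on the compact space $X$ then forces $f(y)-f(x)\to 0$ uniformly, so $\lambda^{r_n(x)}$ converges uniformly to $1/f$, which is exactly statement $(2)$.

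The main obstacle I anticipate is making the uniform-continuity argument in $(1)\Rightarrow(2)$ fully rigorous: one must verify that the atom of $\P(n)$ containing $x$ shrinks uniformly in diameter (not merely pointwise), which is where condition {\bf (KR4)} that the partitions span the topology is essential, and one must keep careful track of base points $y$ versus the running point $x$ so that the estimate $|f(y)-f(x)|\to 0$ is genuinely uniform in $x$. The coboundary bookkeeping in $(2)\Rightarrow(1)$ near the residual point of $\bigcap_n B(n)$ is a minor technical point by comparison, handled by continuity.
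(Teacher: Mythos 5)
Your direction $(2)\Rightarrow(1)$ is correct and is essentially the paper's argument: the uniform limit $\phi$ of the continuous functions $\lambda^{r_n}$ is continuous, the cocycle relation \eqref{eq:retour} gives $\phi\circ T=\lambda^{-1}\phi$ off the single point $\bigcap_n B(n)$, and continuity plus minimality extend it everywhere (and replacing $\phi$ by $\overline{\phi}$ handles the sign of the exponent).

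The direction $(1)\Rightarrow(2)$, however, contains a genuine error. You correctly derive $\lambda^{r_n(x)}=f(y)/f(x)$ with $y=T^{r_n(x)}(x)\in B_k(n)$, but your next claim --- that $x$ and $y$ ``both lie in a single atom of $\P(n)$ whose diameter tends to $0$'' --- is false: $x$ lies in the atom $T^{-r_n(x)}B_k(n)$ and $y$ lies in the atom $B_k(n)$, which are \emph{different} levels of the same tower whenever $r_n(x)>0$; they can be far apart in $X$. If your estimate $f(y)-f(x)\to 0$ uniformly were true, it would give $\lambda^{r_n(x)}\to 1$ uniformly, which by your own direction $(2)\Rightarrow(1)$ would force $\lambda=1$; note also that this contradicts the limit $1/f$ you announce in the same sentence. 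The correct argument, which is the one in the paper, does not compare $x$ with $y$ at all: one observes that $y=T^{r_n(x)}(x)\in B(n)\subseteq B_1(n-1)$ by {\bf (KR6)}, and that the nested clopen sets $B(n)$ decrease to the single point $u$ by {\bf (KR1)} and {\bf (KR3)}, so by compactness ${\rm diam}(B(n))\to 0$ and $T^{r_n(x)}(x)\to u$ \emph{uniformly in} $x$. Uniform continuity of $f$ then gives $\lambda^{r_n(x)}=f(T^{r_n(x)}x)/f(x)\to f(u)/f(x)$ uniformly, which is statement $(2)$. So the relevant hypothesis here is {\bf (KR3)} (shrinking roofs), not {\bf (KR4)} (shrinking atoms), and the limit is $f(u)/f$, not $1$.
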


\begin{proof}
We start proving that (1) implies (2). Let $g$ be a continuous
eigenfunction associated to  $\lambda$. For all $n\geq 1$ and all
$x\in X$ we have $T^{r_n (x)} (x) \in B(n)\subseteq B_1 (n-1)$
(the last inclusion is due to {\bf (KR6)}). Hence, using {\bf
(KR3)}, we deduce that $\lim_{n\rightarrow \infty}  T^{r_n (x)}
(x) = u$ uniformly in $x$, where $u$ is the unique element of
$\cap_{n\geq 0} B(n)$. The eigenfunction $g$ being uniformly
continuous we have that $\lambda^{r_n (x)} = g(T^{r_n (x)}
(x))/g(x)$ tends to $g(u)/g(x)$ uniformly in $x$.

Now we prove (2) implies (1). We set $\phi (x) =
\lim_{n\rightarrow \infty}  \lambda^{r_n (x)}$. Since the
convergence is uniform and $r_n$ is continuous, then $\phi$ is
continuous.

Let $x$ be such that $x\not \in B(n)$ for infinitely many $n$.
Then, from \eqref{eq:retour}, we obtain $\phi(T(x))=\lambda^{-1}
\phi(x)$. Using the minimality of $(X,T)$ and the continuity of
$\phi$, we obtain that $\phi (T (y)) = \lambda^{-1} \phi (y)$ for
all $y\in X$. Consequently $\lambda$ is a continuous eigenvalue.
\end{proof}

\begin{coro}
\label{contgen}
Let $\lambda$ be a complex number of modulus $1$.
\begin{enumerate}
\item
If $\lambda$ is a continuous eigenvalue of $(X,T)$ then
$$
\lim_{n\to \infty} \lambda^{h_{j_n} (n)} = 1
$$
uniformly in $(j_n; n\in \NN) \in \prod_{n\in \NN} \{1,\dots,C(n)\}$.
\item
If
$$
\sum_{m\ge 1}
\left(
\frac{\sup_{k\in \{1,...,C(m+1)\}} h_{k}(m+1)}{\inf_{k\in \{1,...,C(m)\}}
h_{k}(m)}
\right)
\sup_{k\in \{1,...,C(m)\}}
\mid\lambda^{h_{k}(m)} -1\mid <\infty
$$
then $\lambda$ is a continuous eigenvalue of $(X,T)$.
\end{enumerate}
\end{coro}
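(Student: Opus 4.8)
The plan is to derive both statements from Proposition \ref{condcont}, which reduces the property of being a continuous eigenvalue to the uniform convergence of $(\lambda^{r_n(x)}; n \geq 1)$.

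For statement (1), I would start from that characterization: if $\lambda$ is a continuous eigenvalue, then by Proposition \ref{condcont} the functions $\lambda^{r_n}$ converge uniformly to a continuous function $\phi$ which (as in the proof of that proposition) satisfies $\phi \circ T = \lambda^{-1}\phi$ and $|\phi| = 1$. The idea is to read off $\lambda^{h_k(n)}$ from the one-step increment. Fix $n$ and pick any $x$ in the roof $B(n)$; then $r_n(x) = 0$, while by \eqref{eq:retour} one has $r_n(Tx) = h_k(n) - 1$ with $k = \tau_n(Tx)$, hence
$$
\lambda^{h_k(n)} = \lambda \cdot \frac{\lambda^{r_n(Tx)}}{\lambda^{r_n(x)}}.
$$
As $x$ ranges over $B(n)$ the index $k = \tau_n(Tx)$ ranges over all of $\{1,\dots,C(n)\}$. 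Using the uniform convergence $\lambda^{r_n} \to \phi$ (all quantities having modulus $1$, so the ratio converges uniformly) together with $\phi(Tx)/\phi(x) = \lambda^{-1}$, the right-hand side tends to $\lambda\cdot\lambda^{-1} = 1$ uniformly in $x \in B(n)$, hence uniformly over $k$, which is exactly the claimed uniformity over sequences $(j_n)$.

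For statement (2), by Proposition \ref{condcont} it suffices to prove that $(\lambda^{r_n(x)}; n\ge 1)$ is uniformly Cauchy, and for this I would show the telescoping increments are summable in the supremum norm, i.e. $\sum_{n\ge 2}\sup_x |\lambda^{r_n(x)}-\lambda^{r_{n-1}(x)}| < \infty$. By Lemma \ref{le:rn}, $r_n(x)-r_{n-1}(x) = <s_{n-1}(x),H(n-1)>=\sum_t s_{n-1,t}(x)\,h_t(n-1)$, so, the factors being of modulus $1$,
$$
|\lambda^{r_n(x)}-\lambda^{r_{n-1}(x)}| = \Bigl|\lambda^{<s_{n-1}(x),H(n-1)>}-1\Bigr| \le \sum_{t} s_{n-1,t}(x)\,\bigl|\lambda^{h_t(n-1)}-1\bigr|
$$
using $|\prod_i z_i - 1|\le\sum_i|z_i-1|$ for unit-modulus $z_i$. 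Bounding each $|\lambda^{h_t(n-1)}-1|$ by their supremum reduces matters to controlling $\sum_t s_{n-1,t}(x)$ uniformly in $x$.

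The main obstacle — and the only genuinely combinatorial point — is this uniform bound on $\sum_t s_{n-1,t}(x)$. Here I would use that $<s_{n-1}(x),H(n-1)>$ is the total number of level-$(n-1)$ stages traversed within the level-$n$ tower containing $x$, hence at most its height $h_{\tau_n(x)}(n)\le\sup_k h_k(n)$; combined with $h_t(n-1)\ge\inf_k h_k(n-1)$ this gives
$$
\sum_t s_{n-1,t}(x)\le\frac{<s_{n-1}(x),H(n-1)>}{\inf_k h_k(n-1)}\le\frac{\sup_k h_k(n)}{\inf_k h_k(n-1)}.
$$
Substituting back yields $\sup_x|\lambda^{r_n(x)}-\lambda^{r_{n-1}(x)}|\le \frac{\sup_k h_k(n)}{\inf_k h_k(n-1)}\sup_t|\lambda^{h_t(n-1)}-1|$, so with $m=n-1$ the series $\sum_{n\ge 2}\sup_x|\lambda^{r_n(x)}-\lambda^{r_{n-1}(x)}|$ is dominated termwise by the hypothesis in (2). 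Its convergence makes $(\lambda^{r_n})$ uniformly Cauchy, and Proposition \ref{condcont} then gives that $\lambda$ is a continuous eigenvalue.
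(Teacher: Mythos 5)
Your proof is correct and follows essentially the same route as the paper: part (2) is the paper's argument almost verbatim (telescoping $\lambda^{r_n}$ via Lemma \ref{le:rn}, bounding the number of level-$m$ towers crossed by $\sup_k h_k(m+1)/\inf_k h_k(m)$, and concluding with Proposition \ref{condcont}). For part (1) the paper works directly with the uniform continuity of a continuous eigenfunction $g$ evaluated at the two ends of a tower, using {\bf (KR6)} and {\bf (KR3)}, whereas you route the same mechanism through the uniform limit $\phi$ of $\lambda^{r_n}$ supplied by Proposition \ref{condcont}; both versions are sound.
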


\begin{proof}
Let $g$ be a continuous eigenfunction of $\lambda$. Then, it is uniformly continuous. Let $\epsilon > 0$.
There exists $n_0  \in \NN$
such that $|g (y) -g (u)| < \epsilon / 2$ for all $y\in B_1 (n_0)$, where $\{ u \} = \cap_{n\in \NN} B(n)$.

Let  $(j_n ; n\in \NN ) \in \prod_{n\in \NN} \{1,\dots,C(n)\}$.
For all $n\in \NN$ we take $x(n) \in B_{j_n} (n)$ and we set $y(n)
= T^{-h_{j_n} (n)} (x(n)) \in B(n)$. Hence, using {\bf (KR6)}, for
all $n\geq n_0+1$ the points  $x(n)$ and $y(n)$ belong to $B(n)
\subseteq B_1 (n_0)$. Consequently
\begin{align*} |
\lambda^{h_{j_n} (n)} - 1| & =
| g (T^{h_{j_n} (n)} y(n))   - g(y(n))| \\
& \leq
| g (x(n)) - g(u)| + |g(u) - g(y(n))|
< \epsilon .
\end{align*}

Now we prove (2). It suffices to remark, by Lemma \ref{le:rn},
that for all $x\in X$ and all $0< n < m$,
$$
\mid \lambda^{r_m (x)} - \lambda^{r_n (x)} \mid
 =
\mid 1 - \lambda^{r_m (x) - r_n (x)} \mid
$$
$$
\leq
\sum_{l=n}^{m-1} \frac{\sup_{k\in \{1,...,C(l+1)\}}
h_{k}(l+1)}{\inf_{k\in \{1,...,C(l)\}} h_{k}(l)}
\sup_{k\in \{1,...,C(l)\}} \mid 1-\lambda^{h_{k}(l)}\mid .
$$

Hence, from Proposition \ref{condcont}, $\lambda$ is a continuous eigenvalue.
\end{proof}
Remark that for linearly recurrent systems statement (2) gives the 
sufficient condition for $\lambda$ to be a continuous eigenvalue. This was proved 
in \cite{CDHM}.

\subsection{The linearly recurrent case}
Now we assume $(X,T)$ is linearly recurrent and we prove Theorem
\ref{ssi} part (2). We also assume without loss of generality that
the sequence of partitions verifies {\bf (KR5')}, that is entries
of $M(n)$ are bigger than $2$ for any $n\geq 2$ (see discussion in
subsection \ref{linrec}). To prove the result we introduce an
intermediate statement which gives a more precise interpretation
to the necessary condition.

\begin{prop}
\label{th:condcont}
Let $\lambda=\exp(2i\pi\alpha) $
be a complex number of modulus 1.
The following properties are equivalent.
\begin{enumerate}
\item
$\lambda$ is a continuous eigenvalue of the minimal Cantor system $(X,T)$.

\item There exist $n_0 \in \NN$, $v \in \RR^{C(n_0)}$,
$z \in \ZZ^{C(n_0)}$, such that $\alpha P(n_0) H(1)=v + z$,
$P(n,n_0) v \to 0$ as $n\to \infty$ and the series
$$
\sum_{j \geq n_0+1} <s_j(x), P(j,n_0) v >
$$
converges for every $x \in X$.
\item
$\sum_{n\geq 2} \vvert \alpha  P(n) H(1) \vvert < \infty$
\end{enumerate}
\end{prop}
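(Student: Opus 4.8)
The goal is to prove the equivalence of the three conditions in Proposition~\ref{th:condcont}. My plan is to establish a cycle of implications, exploiting Proposition~\ref{condcont} (which says $\lambda$ is a continuous eigenvalue if and only if $\alpha r_n(x)$ converges $\mathrm{mod}\ \ZZ$ uniformly in $x$) as the bridge between the dynamical condition (1) and the arithmetic/combinatorial conditions (2) and (3). The natural order is $(1)\Rightarrow(3)\Rightarrow(2)\Rightarrow(1)$, though one may instead find $(2)\Rightarrow(3)$ and $(3)\Rightarrow(2)$ cleaner to separate; I would settle the exact routing once the estimates are in hand.

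\textbf{The implication $(3)\Rightarrow(2)$.} Assuming $\sum_{n\geq 2}\vvert\alpha P(n)H(1)\vvert<\infty$, in particular $\vvert P(n)(\alpha H(1))\vvert\to 0$, so Lemma~\ref{tendto0} furnishes $n_0\geq 2$, $v\in\RR^{C(n_0)}$ and $w\in\ZZ^{C(n_0)}$ with $\alpha P(n_0)H(1)=v+w$ and $\Vert P(n,n_0)v\Vert\to 0$. I would then show $\Vert P(n,n_0)v\Vert$ is, up to the bounded factors coming from linear recurrence (using $C(n)\leq L$ and {\bf (LR)}), comparable to $\vvert\alpha P(n)H(1)\vvert$, so that $\sum_n\Vert P(n,n_0)v\Vert<\infty$. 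Since $s_j(x)$ is a nonnegative integer vector whose entries are bounded by the combinatorial constant $L$ (each tower of level $j$ is crossed a bounded number of times before reaching level $j+1$), the terms $|<s_j(x),P(j,n_0)v>|$ are bounded by $L\cdot\Vert P(j,n_0)v\Vert$ uniformly in $x$, giving absolute and uniform convergence of $\sum_j<s_j(x),P(j,n_0)v>$.

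\textbf{The implication $(2)\Rightarrow(1)$ and $(1)\Rightarrow(3)$.} For $(2)\Rightarrow(1)$, I would use Lemma~\ref{le:rn}, which expresses $r_n(x)$ as $\sum_{j}<s_j(x),P(j)H(1)>$ plus lower-order terms; multiplying by $\alpha$ and reducing $\mathrm{mod}\ \ZZ$, the integer part $z$ of $\alpha P(n_0)H(1)=v+z$ drops out, leaving $\alpha r_n(x)$ congruent to a partial sum of $\sum_j<s_j(x),P(j,n_0)v>$ modulo $\ZZ$. The hypothesized uniform convergence of this series then gives uniform convergence of $\alpha r_n(x)\ \mathrm{mod}\ \ZZ$, whence $\lambda$ is a continuous eigenvalue by Proposition~\ref{condcont}. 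For $(1)\Rightarrow(3)$, Corollary~\ref{contgen}(1) gives that $\lambda^{h_{j_n}(n)}\to 1$ uniformly, i.e. $\vvert\alpha H(n)\vvert=\vvert\alpha P(n)H(1)\vvert\to 0$; I must upgrade this to \emph{summability}. Here I would exploit {\bf (KR5')}: each matrix $M(n)$, $n\geq 2$, has all entries at least $2$, so within a single tower of level $n$ a sub-tower of level $n-1$ is repeated, and one can choose two points whose $r_n$-values differ by exactly $\alpha P(n-1)H(1)$ in a chosen coordinate while controlling the contributions of the other levels. Summing the telescoped differences and invoking the uniform Cauchy property of $\alpha r_n(x)\ \mathrm{mod}\ \ZZ$ forces $\sum_n\vvert\alpha P(n)H(1)\vvert<\infty$.

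\textbf{The main obstacle.} I expect the hardest step to be $(1)\Rightarrow(3)$: passing from \emph{uniform convergence} of $\alpha r_n(x)$ (equivalently, a summable-increment heuristic) to the genuine $\ell^1$ summability of $\vvert\alpha P(n)H(1)\vvert$. The subtlety is that $\vvert\cdot\vvert$ is not additive, so I cannot naively turn the convergence of the reduced phases into a convergent sum of norms; I need to manufacture, for each $n$, two explicit points of $X$ whose $r_n$-difference isolates a single term $\pm P(n)H(1)$ (or a fixed coordinate thereof) modulo a controllably small error, using the repetition guaranteed by {\bf (KR5')} and the bounded geometry from {\bf (LR)}. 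Making these choices simultaneously coherent across all $n$ so that the errors themselves sum is the delicate point, and it is precisely where the strengthened hypothesis that entries of $M(n)$ exceed $2$ (rather than merely being positive) is essential.
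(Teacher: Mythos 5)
Your overall architecture is sound and you have located the right tools (Proposition \ref{condcont}, Corollary \ref{contgen}, Lemmas \ref{tendto0} and \ref{le:rn}, and the role of {\bf (KR5')}), but there are two genuine gaps. First, your routing $(2)\Rightarrow(1)$ does not work as stated: condition (2) only asserts that $\sum_j <s_j(x), P(j,n_0) v>$ converges \emph{for every} $x$, i.e.\ pointwise, whereas Proposition \ref{condcont} requires \emph{uniform} convergence of $\alpha r_n(x)$ $({\rm mod}\ \ZZ)$. Pointwise convergence of continuous partial sums on a compact space does not upgrade itself to uniform convergence, so you cannot invoke Proposition \ref{condcont} at that point. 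The paper avoids this by proving $(2)\Rightarrow(3)$ and then quoting $(3)\Rightarrow(1)$ from \cite{CDHM}; its cycle is $(3)\Rightarrow(1)\Rightarrow(2)\Rightarrow(3)$.

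Second, and more seriously, the step you correctly identify as the hardest --- passing from convergence to summability --- is left unresolved precisely at its crux. The paper does this in two stages. Stage one ($(1)\Rightarrow(2)$): after Lemma \ref{tendto0} replaces $\alpha P(n)H(1)$ by a real vector $P(n)v\to 0$, the mod-$\ZZ$ convergence of $\alpha r_n(x)$ is upgraded to genuine convergence in $\RR$ of $\sum_j <s_j(x), P(j) v>$, because the increments $<s_n(x), P(n) v>$ tend to $0$ (the $s_n$ being bounded by {\bf (LR)}), so the integer parts of the partial sums eventually stabilize. Stage two ($(2)\Rightarrow(3)$): to extract $\sum_n \Vert P(n)v\Vert<\infty$ from convergence of the series at individual points, the paper (i) picks for each $n$ the coordinate $i(n)$ of maximal absolute value and partitions the indices into $I^+$ and $I^-$ according to the sign of $<e_{i(n)}, P(n) v>$, so that each selected sub-series has terms of constant sign and its convergence \emph{is} absolute convergence; (ii) splits further into even and odd $n$ so that the two-point construction, which perturbs the itinerary at levels $n$ and $n+1$, does not collide with the construction at the adjacent level; and (iii) uses {\bf (KR5')} together with \eqref{rel:thetakr6} and Lemma \ref{important} to build two points $x,y$ with $s_n(x)-s_n(y)=e_{i(n)}$ exactly on the selected set and $s_n(x)-s_n(y)=0$ elsewhere. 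Your sketch gestures at (iii) but contains neither the sign-separation (i) nor the parity trick (ii); without (i) in particular, convergence of the telescoped differences only controls a signed sum, not $\sum_n\Vert P(n)v\Vert$, so the argument does not close.
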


\begin{proof}

In \cite{CDHM} it is proven that (3) implies (1).

\medskip

We prove that (1) implies (2): assume $\lambda$ is a continuous
eigenvalue of $(X,T)$. We deduce from statement (1) of Corollary
\ref{contgen} that $\vvert \alpha P(n) H(1) \vvert$ converges to
$0$ as $n$ tends to $\infty$. By Lemma \ref{tendto0}, there are
$n_0 \in \NN$, $v \in \RR^{C(n_0)}$ and  $z \in \ZZ^{C(n_0)}$ such
that $\alpha P(n_0) H(1)= v + z$ and $P(n,n_0) v \to 0$ as $n\to
\infty$. By modifying a finite number of towers we can assume
without loss of generality that $n_0=1$.

By Lemma \ref{le:rn}, for $n\geq 1$ and $x \in X$, $r_n(x) =
\sum_{j=1}^{n-1} <s_j(x), P(j) H(1)>  + s_0 (x)$, where we put
$P(1)=I$. Then, 
$$\alpha r_n(x)= \sum_{j=1}^{n-1} <s_j(x), P(j)
v> + \sum_{j=1}^{n-1} <s_j(x), P(j) z> + \alpha s_0(x).$$ 
From
Proposition \ref{condcont}, $\sum_{j=1}^{n-1} <s_j(x), P(j) v>
+ \alpha s_0(x) \to v(x) \ (\text{\rm mod}\  \ZZ)$ as $n \to
\infty$. We distinguish two cases: if $v(x) \in (0,1)$, we write
$\sum_{j=1}^{n-1} <s_j(x), P(j) v> + \alpha s_0(x) = V_n(x) +
v_n(x)$ with $V_n(x) \in \ZZ$ and $v_n(x) \in [0,1)$; if $v(x)=0$
we consider $v_n(x) \in [-1/2,1/2)$. Then, in both cases, $(v_n
(x) ; n\in \NN)$ converges and a fortiori $(v_{n+1}(x)-v_n(x); n
\geq 1) \to 0$ as $n \to \infty$. Moreover,

\begin{align*}
& \sum_{j=1}^{n} <s_j(x), P(j) v> - \sum_{j=1}^{n-1} <s_j(x), 
P(j) v> \\ & =
<s_n(x), P(n) v> = V_{n+1}(x)-V_n(x)+v_{n+1}(x) - v_n(x).
\end{align*}

Since, for a linearly recurrent system $\{ s_n (x) ; x\in X , n\in
\NN \}$ is bounded, $P(n)v \to 0$ and $(v_{n+1}(x)-v_n(x))\to 0$
as $n \to \infty$. We conclude $V_n(x)$ is a constant integer for
all large enough $n \in \NN$. Consequently the series 
$\sum_{j\geq 2} <s_j(x),
P(j) v>$ converges.

\medskip
Now we prove that (2) implies (3):
we assume, without loss of generality,  that $n_0=1$ and that for any 
$x \in X$ the series
$$
\sum_{j\geq 2} <s_j(x), P(j) v> \in \RR 
$$
converges.
It suffices to prove that $\sum_{j\geq 2} \Vert P(j) v \Vert <
\infty$.

For $n \geq 2$ define $i(n) \in \{1,...,C(n) \}$
such that
$$|<e_{i(n)}, P(n) v> |=\max_{i \in \{1,...,C(n)\}} |<e_i, P(n) v>|$$
where $e_{i}$ is the $i$-th canonical vector of
$\RR^{C(n)}$. Let

$$
I^+=\{n \geq 2; <e_{i(n)}, P(n) v> \geq 0 \}, \
I^-=\{n \geq 2 ; <e_{i(n)}, P(n) v> <0\}.
$$

To prove
$\sum_{j\geq 2} \Vert P(j) v \Vert < \infty$
we only need to show

$$
\sum_{j\in I^+} <e_{i(j)}, P(j) v> < \infty \hbox{ and } -
\sum_{j\in I^-} <e_{i(j)}, P(j) v> < \infty.
$$

Since arguments we will use are similar in both cases we only
prove the first fact. To prove $\sum_{j\in I^+} <e_{i(j)}, 
P(j) v> < \infty$ we only show
\begin{equation}
\label{eq:iplus} \sum_{j\in I^+\cap 2\NN} <e_{i(j)}, P(j) v> <
\infty,
\end{equation}
and analogously it can be proven

$$
\sum_{j\in I^+\cap (2\NN+1)} <e_{i(j)}, P(j) v> < \infty.
$$

We construct two points $x,y \in X$ such that
$s_n(x)-s_n(y)=e_{i(n)}$ if
$n \in I^+\cap 2\NN$  and $s_n(x)-s_n(y)=0$ elsewhere.
By hypothesis, from this fact we conclude \eqref{eq:iplus}.

To construct $x$ and $y$, according to Lemma \ref{important}, we
only need to produce sequences

$$
(t_n; n \in \NN) \in \Pi_{n \in \NN} \{1,...,C(n)\}, \ \ (j_n;
n\geq 1) \in \Pi_{n \geq 1} \{1,...,m_{t_n}(n)\}
$$
and
$$
(\bar t_n; n \in \NN) \in \Pi_{n \in \NN} \{1,...,C(n)\}, \ \
(\bar j_n; n\geq 1) \in \Pi_{n \geq 1} \{1,...,m_{\bar t_n}(n)\}
$$
such that

\begin{align}
\label{rel:thetat}
\theta_{j_n}^{t_n}(n-1)=t_{n-1}
\hbox{ and }
\theta_{\bar j_n}^{\bar t_n}(n-1)=\bar t_{n-1} \hbox{ for all } n\geq 1.
\end{align}
The point
$x$ is the unique one such that $\tau_n(x)=t_n$ and  $l_n(x)=j_n$. Point
$y$ is defined analogously with respect to $\bar t_n$ and $\bar j_n$.
Given $n \in (I^+ \cap 2\NN)^c$ put $t_n=\bar t_n= 1$.

For $n \in I^+ \cap 2\NN$, by property {\bf (KR5')}, there exist
$k \in \{1,...,m_1(n+1)-1\}$ such that $\theta_{k+1}^1(n)=i(n)$.
Put

$$
\bar t_n =  i(n), \ \bar j_n = m_{\bar t_n} (n), \
t_n=\theta_k^1(n) \hbox{ and } j_n = m_{ t_n} (n).
$$

Using \eqref{rel:thetakr6} we obtain $\theta_{j_n}^{t_n} (n-1) = 1$ and $\theta_{\bar j_n}^{\bar t_n} (n-1)= 1$.
Then, we set $\bar t_{n+1} = t_{n+1} = 1$, $\bar j_{n+1} = k+1$ and $j_{n+1} = k$. Consequently, the relations \eqref{rel:thetat} are satisfied for $n$ and $n+1$.

Now we treat the remaining case: $n \in (I^+)^c \cap 2\NN$. We recall that $t_n = \bar t_n = t_{n+1} = \bar t_{n+1} = 1$. It suffices to set

$$
j_n
=
\bar j_n
=
m_{1} (n)
\hbox{ and }
j_{n+1}
=
\bar j_{n+1}
=
m_{1} (n+1)
$$
to fulfill the relations \eqref{rel:thetat}.

For each $n\in I^+ \cap 2\NN$, the towers of level $n$ visited by
$x$ and $y$ after their first entrance time to $B(n)$ and before
their first entrance time to $B(n+1)$ are
$$
\S_n (x) = \{\theta^{1}_{k+1} (n), \dots ,\theta^{1}_{m_{1}
(n+1)} (n)\} \hbox{ and } \S_n (y) =\{ \theta^{1}_{k+2} (n),
\dots ,\theta^{1}_{m_{1} (n+1)} (n)\}
$$
respectively. Therefore, $s_n(x)-s_n(y)=e_{i(n)}$.

On the other hand, if $n\not \in I^+ \cap 2\NN$ then $\S_n (x)$
and $\S_n (y)$ are the empty set. Hence $s_n (x)=s_n(y)=0$.
\end{proof}

\section{Example: measurable and non continuous eigenvalues}
\label{example}

We construct explicitly a system with a nontrivial Kronecker
factor but having a trivial equicontinuous factor. Let us consider
the commuting matrices
$$
A=\left[ \begin{array}{cc}
5 & 2\\
2 & 3
\end{array}\right]
\hbox{ and } B=\left[
\begin{array}{cc}
2 & 1\\
1 & 1
\end{array}
\right].
$$
We set $\varphi = \frac{1+\sqrt{5}}{2}$. Let $e= (\varphi,
1)^T$, $f = (-1, \varphi)^T$, $\alpha_A = 3+2\varphi$, $\beta_A
= 5-2\varphi$, $\alpha_B = 1 + \varphi$ and $\beta_B = 2 -
\varphi$. Observe that $\alpha_A > \alpha_B> \beta_A>1>\beta_B>0$
and $\{e, f \}$ is a base of $\RR^2$ made of the common
eigenvectors associated to eigenvalues $\alpha_A, \beta_A$ of $A$
and  $\alpha_B, \beta_B$ of $B$ respectively.

We define recursively the sequence $(v_n;n \geq 1)$ of real
numbers by: $v_1 = 1$ and for all $n > 1 $
$$v_{n+1} =
\left\{
\begin{array}{ll}
\beta_A v_n & \hbox{ if } n v_n \leq 1\\
\beta_B v_n & \hbox{ if } n v_n > 1
\end{array}
\right.
$$
Notice that the sequence $(nv_n;n \geq 1)$ is uniformly bounded
and uniformly bounded away from $0$. Now let $H(1)=M(1)=(1,1)^T$
and for $n \geq 1$
$$M(n+1) =
\left\{
\begin{array}{ll}
A  & \hbox{ if } n v_n \leq 1  \\
B  & \hbox{ if } n v_n > 1
\end{array}
\right.
$$
Remark $M(n)=A$ for infinitely many values of $n$.

Define the words in $\{1,2\}^*$, $\theta^1(A)=2211111$,
$\theta^2(A)=22211$, $\theta^1(B)=211$ and 
$\theta^2(B)=21$. Let$(X,T )$ be a minimal Cantor system such that
there is a sequence of CKR partitions $(\P(n);n\in \NN)$ verifying
{\bf (KR1)}-{\bf (KR6)} with associated sequence of matrices
$(M(n); n \geq 1)$. Moreover, we require that
for $n \geq 1$ and $t\in \{1,2\}$, 
$\theta^t(n)=\theta^t(M(n+1))$ holds (see \eqref{codage} 
for the definition 
of $\theta^t(n)$). This is possible by
\cite{HPS} using Bratteli diagrams. It is clear that $(X,T)$ is
linearly recurrent. We call $\mu$ its unique ergodic measure.

A symbolic way to see this system is by considering the
substitutions $\sigma_A: \{1,2\} \to \{1,2\}^*$, $\sigma_A(1)=
2211111$, $\sigma_A(2)=22211$, and $\sigma_B: \{1,2\} \to
\{1,2\}^*$, $\sigma_B(1)=211$, $\sigma_B(2)=21$. Define a sequence
of substitutions $(\sigma_n;n \geq 1)$ by $\sigma_1 = Id$ and, for
all $n>1$, $\sigma_{n+1} =  \sigma_{n} \circ \sigma_{M(n)}$. It
follows that $... 1 1 1 \sigma_n(1).\sigma_n(2) 2 2 2 ...$
converges to some $\omega \in \{1,2\}^\ZZ$, where the dot
indicates the position to the left of $0$ coordinate. We set $X =
\overline{\{ T^n (\omega), n\in \ZZ\}}$, where $T$ is the shift
map.

Before to study the system $(X,T)$ defined by this sequence of matrices
we need a general property. We keep notations of previous sections.
\begin{lemma}
\label{v-ortho} Let $v \in \RR^{C(1)}$. If $\lim_{n\to \infty}
\Vert P(n) v \Vert= 0$, then $v$ is orthogonal to the vector
$\mu(1)=(\mu (B_k (1)) ; 1\leq k \leq C(1))^T$.
\end{lemma}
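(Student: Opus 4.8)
The plan is to exploit the duality between the vectors of roof measures $\mu(n)$ and the products of transition matrices, which is dual to the relation $P(n)H(1)=H(n)$ already recorded in the excerpt. First I would rewrite \eqref{rel-mesure}: taking $n-k=1$ (that is $k=n-1$, allowed since we only need $1\le k<n$) gives, for every $n\geq 2$,
$$\mu(1)=M^T(2)M^T(3)\cdots M^T(n)\,\mu(n)=P(n)^T\mu(n),$$
because $P(n)=P(n,1)=M(n)\cdots M(2)$ and hence $P(n)^T=M^T(2)M^T(3)\cdots M^T(n)$.

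With this identity the rest is immediate. For every $n\geq 2$ one has
$$<v,\mu(1)>\ =\ <v,P(n)^T\mu(n)>\ =\ <P(n)v,\mu(n)>,$$
so the left-hand side, which does not depend on $n$, is controlled by
$$|<v,\mu(1)>|\ \le\ \Vert P(n)v\Vert\sum_{t=1}^{C(n)}\mu(B_t(n))\ =\ \Vert P(n)v\Vert\,\mu(B(n))\ \le\ \Vert P(n)v\Vert,$$
where the first inequality uses that $\mu(B_t(n))\geq 0$, the equality uses that the roofs $B_t(n)$ are disjoint with union $B(n)$, and the last step uses that $\mu$ is a probability measure. Letting $n\to\infty$ and invoking the hypothesis $\Vert P(n)v\Vert\to 0$ forces $<v,\mu(1)>\,=0$, which is the claim.

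There is essentially no obstacle here; the only things to verify are the dual relation $\mu(1)=P(n)^T\mu(n)$, which is merely \eqref{rel-mesure} rewritten, and the elementary estimate $|<P(n)v,\mu(n)>|\le \Vert P(n)v\Vert\sum_{t}\mu(B_t(n))$. It is worth stressing that we do not even need $\mu(B(n))\to 0$: boundedness of $\sum_t\mu(B_t(n))=\mu(B(n))$ by $1$ already suffices, since $<v,\mu(1)>$ is a fixed constant while its upper bound tends to $0$.
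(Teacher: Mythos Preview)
Your proof is correct and follows essentially the same approach as the paper: both use the duality $\mu(1)=P(n)^T\mu(n)$ to rewrite $\langle v,\mu(1)\rangle=\langle P(n)v,\mu(n)\rangle$ and then bound this by $\Vert P(n)v\Vert$, letting $n\to\infty$. You have simply spelled out the inequality $|\langle P(n)v,\mu(n)\rangle|\le \Vert P(n)v\Vert$ in more detail than the paper does.
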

\begin{proof}
Let $v \in \RR^{C(1)}$ be such that $\lim_{n\to \infty} \Vert P(n)
v \Vert= 0$. Then, for $n>1$
\begin{align*}
\vert <\mu(1), v> \vert &= \vert < P^T(n)\mu(n), v> \vert \\
&= \vert <\mu(n), P(n)v> \vert \\
&\leq \Vert P(n)v \Vert ,
\end{align*}

and the last term converges to $0$ as $n\to\infty$. Thus $v$ is
orthogonal to $\mu(1)$.
\end{proof}

\begin{prop}
\label{example}
Let $(X,T)$ be the linearly recurrent system defined above.
The set of eigenvalues of $(X,T)$ is

$$
E_\mu=\left\{ \exp{(2i\pi \alpha)} \in \CC ; \alpha = \left(
\varphi - 1, 2-\varphi \right) A^{-l} w, \ l \geq 0, w\in \ZZ^2
\right\}
$$
None of these eigenvalues is continuous except the trivial one.
\end{prop}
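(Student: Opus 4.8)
The plan is to diagonalize the dynamics in the common eigenbasis $\{e,f\}$ and read both statements off Theorem \ref{ssi}. Since $A$ and $B$ commute, $P(n)=A^{a(n)}B^{b(n)}$ for $n\ge 1$, where $a(n)=\#\{2\le j\le n:\ M(j)=A\}$ and $b(n)=\#\{2\le j\le n:\ M(j)=B\}$. Hence $P(n)e=\lambda_e(n)e$ and $P(n)f=\lambda_f(n)f$ with $\lambda_e(n)=\alpha_A^{a(n)}\alpha_B^{b(n)}\to\infty$ and $\lambda_f(n)=\beta_A^{a(n)}\beta_B^{b(n)}$. The defining recursion of $(v_n)$ shows $M(j)=A$ exactly when $v_j=\beta_A v_{j-1}$, so $\lambda_f(n)=v_n$. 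Writing $H(1)=(1,1)^T=c_e e+c_f f$ with $c_e=\varphi^2/(\varphi+2)$ and $c_f=(\varphi-1)/(\varphi+2)$ (both nonzero), we get $H(n)=c_e\lambda_e(n)e+c_f v_n f$. Two facts will be used throughout: since $nv_n$ is bounded and bounded away from $0$, $\sum v_n^2<\infty$ while $\sum v_n=\infty$; and $w\mapsto{<}e,w{>}=\varphi w_1+w_2$ is a group isomorphism $\ZZ^2\to\ZZ[\varphi]$ intertwining $A$ with multiplication by $\alpha_A$ (as $A$ is symmetric with $Ae=\alpha_A e$).

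For $E_\mu$ I use Theorem \ref{ssi}(1): $\lambda=\exp(2i\pi\alpha)$ is an eigenvalue iff $\sum_{n\ge2}\vvert\alpha H(n)\vvert^2<\infty$. Note first that $(\varphi-1,2-\varphi)=(2-\varphi)e^T$, so the claimed set is $\{(2-\varphi)\alpha_A^{-l}{<}e,w{>}\colon l\ge0,\ w\in\ZZ^2\}$. The mechanism is that a real vector $u$ splits as $u=w^*+qf$ with $w^*\in\ZZ^2$, $q\in\RR$, iff ${<}u,e{>}\in\ZZ[\varphi]$ (because $f\perp e$, so $u-w^*\parallel f$ is equivalent to ${<}u,e{>}={<}w^*,e{>}$). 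For sufficiency: given $\alpha=(2-\varphi)\alpha_A^{-l}{<}e,w{>}$, pick $m$ with $a(m)\ge l$ (possible since $M(n)=A$ infinitely often); using $(2-\varphi)\varphi^2=1$ and $\alpha_B=\varphi^2\in\ZZ[\varphi]^\times$ one gets ${<}\alpha H(m),e{>}=\alpha\varphi^2\lambda_e(m)=\alpha_A^{a(m)-l}\alpha_B^{b(m)}{<}e,w{>}\in\ZZ[\varphi]$, hence $\alpha H(m)=w^*+qf$ with $w^*\in\ZZ^2$. For $n\ge m$, $\alpha H(n)=P(n,m)w^*+q(v_n/v_m)f$ with $P(n,m)w^*\in\ZZ^2$, so $\vvert\alpha H(n)\vvert\le |q|(v_n/v_m)\varphi$ and the square-sum converges (the finitely many terms $n<m$ are harmless). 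For necessity: $\sum\vvert\alpha H(n)\vvert^2<\infty$ gives $\vvert P(n)(\alpha H(1))\vvert\to0$, so Lemma \ref{tendto0} yields $m$, $w^*\in\ZZ^2$ and $v'\in\RR^2$ with $\alpha H(m)=w^*+v'$ and $\Vert P(n,m)v'\Vert\to0$; as $\lambda_e(n)\to\infty$ this forces the $e$-component of $v'$ to vanish, i.e.\ $v'\parallel f$ (compatibly with Lemma \ref{v-ortho}). Thus ${<}\alpha H(m),e{>}={<}w^*,e{>}\in\ZZ[\varphi]$, and solving $\alpha\varphi^2\lambda_e(m)={<}w^*,e{>}$ while absorbing the unit $\alpha_B^{b(m)}=\varphi^{2b(m)}$ gives $\alpha=(2-\varphi)\alpha_A^{-a(m)}{<}e,w'{>}$ for some $w'\in\ZZ^2$. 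This identifies $E_\mu$ with the stated set.

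For continuity I use Theorem \ref{ssi}(2): continuity is equivalent to $\sum_{n\ge2}\vvert\alpha H(n)\vvert<\infty$. By the previous paragraph every $\lambda\in E_\mu$ satisfies $\alpha H(n)\equiv\tilde q\,v_n f\pmod{\ZZ^2}$ for $n\ge m$, with a fixed $\tilde q=q/v_m$; hence $\vvert\alpha H(n)\vvert=|\tilde q|\varphi\,v_n$ for all large $n$, and since $\sum v_n=\infty$, continuity holds iff $\tilde q=0$, i.e.\ iff $\alpha H(m)\in\ZZ^2$. It remains to show that $\alpha H(m)\in\ZZ^2$ forces $\alpha\in\ZZ$, and for this I prove $H(n)$ is primitive, $\gcd(h_1(n),h_2(n))=1$, for every $n$. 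The key identity is ${<}H(n),e{>}=\varphi^2\lambda_e(n)=\varphi^{2+2b(n)}\alpha_A^{a(n)}$, whose norm over $\QQ$ equals $-h_1(n)^2+h_1(n)h_2(n)+h_2(n)^2=N(\varphi)^{2+2b(n)}N(\alpha_A)^{a(n)}=11^{a(n)}$ (using $N(\varphi)=-1$, $N(\alpha_A)=11$). Any prime $p\mid\gcd(h_1(n),h_2(n))$ then satisfies $p^2\mid 11^{a(n)}$, so $p=11$; but $11=\alpha_A\beta_A$ splits in $\ZZ[\varphi]$ into non-associate primes (as $11$ is unramified), while ${<}H(n),e{>}$ is a unit times a power of the single prime $\alpha_A$, so $\beta_A$, hence $11$, does not divide it, whence $11\nmid\gcd(h_1(n),h_2(n))$. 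Therefore the gcd is $1$, and from $\alpha h_1(m),\alpha h_2(m)\in\ZZ$ with $\gcd=1$ we conclude $\alpha\in\ZZ$, i.e.\ $\lambda=1$; no nontrivial eigenvalue is continuous.

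The main obstacle is precisely this continuity half: ruling out that $\alpha H(n)$ becomes exactly integral ($\tilde q=0$) for a nontrivial $\alpha$. Everything reduces to primitivity of the height vectors, and the cleanest route I see is the arithmetic in $\ZZ[\varphi]$ above — the $A/B$ coding is driven by the real sequence $(v_n)$ and is not visibly controlled modulo $11$, so a direct combinatorial check that $H(n)$ avoids $\ker(A\bmod 11)$ looks delicate, whereas the norm computation settles it at once.
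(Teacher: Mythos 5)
Your argument is correct, and its skeleton coincides with the paper's: identify $E_\mu$ through Theorem \ref{ssi} combined with Lemma \ref{tendto0}, exploit the common eigenbasis $\{e,f\}$ so that $\alpha H(n)$ is, modulo $\ZZ^2$ and for $n$ large, a fixed multiple of $v_nf$, and conclude from $\sum v_n^2<\infty$ versus $\sum v_n=\infty$. Where you differ is in how the arithmetic is packaged. The paper stays at the level of matrices: it writes $\alpha H(1)$ explicitly in terms of $P(m)^{-1}w$ and uses $\det A=11$, $\det B=1$ to absorb the $B$-powers into a new integer vector, landing on $\alpha=(\varphi-1,2-\varphi)A^{-l}w'$. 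You transport everything into $\ZZ[\varphi]$ via $w\mapsto\langle e,w\rangle$, where $\alpha_B=\varphi^2$ is a unit and $\alpha_A$ is a prime of norm $11$; this is the same mechanism in cleaner clothing, and your identity $(2-\varphi)\varphi^2=1$ plays the role of the paper's determinant bookkeeping. The genuine added value is the final step: the paper dismisses the case where $\alpha H(m)$ is an integer vector with the remark that ``it is not difficult to show by induction that $\gcd(h_1(m),h_2(m))=1$'', whereas you actually prove primitivity, via the norm identity $-h_1(n)^2+h_1(n)h_2(n)+h_2(n)^2=11^{a(n)}$ together with the fact that $11$ splits into the non-associate primes $\alpha_A,\beta_A$ while $\langle H(n),e\rangle=\varphi^{2+2b(n)}\alpha_A^{a(n)}$ involves only the prime $\alpha_A$. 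I checked this on $H(2)=(7,5)^T$, where $7\varphi+5=\varphi^2\alpha_A$ has norm $11$. Since the gcd fact is also what guarantees that the constant $c$ in the paper's Claim is strictly positive for nontrivial $\lambda\in E_\mu$ (hence that the $L^1$ series genuinely diverges), supplying this proof closes a step the paper leaves to the reader rather than adding a detour.
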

\begin{proof}
Let $v = - (\varphi-2) f = (\varphi - 2 , \varphi -1)^T$ and $n
\geq 2$. Hence, $P(n)v=P(n)(\varphi H(1))$ (mod $\ZZ^2)$. Also,
since $v$ is an eigenvector of $A$ and $B$, from the definition of
$v_n$ we get
$$
P(n) v = \beta_{M(n)}\cdots \beta_{M(2)}v = v_n v.
$$
The sequence $(v_n)_{n \geq 1}$ was constructed so that $n v_n $
is uniformly bounded and uniformly bounded away from $0$. It
follows that

\begin{equation}\label{sumv}
\sum_{n\geq 2} v_n = \infty \text{ and } \sum_{n\geq 2} v_n^2 < \infty \ ,
\end{equation}
and
\begin{equation}
\label{somme-finie}
\sum_{n\geq 2} \Vert P(n) v \Vert =  \infty \hbox{ and }
\sum_{n\geq 2} \Vert P(n) v \Vert^2  <  \infty.
\end{equation}
In particular, $\lim_{n\to \infty} \Vert P(n) v \Vert = 0$ and, by
Lemma \ref{v-ortho}, $v$ is orthogonal to $\mu(1)=(\mu (B_1(1)),
\mu (B_2 (1))^T$.
\medskip

{\bf Claim:} {\it Let $\alpha \in \RR$ and $\lambda=\exp{(2i\pi \alpha)}$:  
$\vvert P(n)(\alpha H(1))
\vvert \to 0$ as $n\to \infty$ holds if and only if $\lambda \in
E_\mu$. Moreover, if $\lambda \in E_\mu$ then $\vvert P(n)(\alpha
H(1))\vvert = c \Vert P(n)v\Vert$, for some positive constant $c$.}
\medskip

{\it Proof of the claim.} First assume $\vvert P(n)(\alpha H(1))
\vvert \to 0$ as $n\to \infty$ holds. By Lemma \ref{tendto0},
there exist $m \geq 2$, an integer vector $w \in \ZZ^{C(m)}$ and a
real vector $v'\in \RR^{C(m)}$ with $P(m) (\alpha H(1))=v' + w$ and
$\Vert P(n)P(m)^{-1}v'\Vert\to 0$ as $n\to\infty$. From Lemma
\ref{v-ortho}, vector $P(m)^{-1}v'$ is orthogonal to $\mu(1)$.
Hence, there exists $k\in \RR$ such that $P(m)^{-1}v'=kv$ and
\begin{equation}
\label{delta} P(m)( \alpha H(1)) = k P(m) v + w.
\end{equation}
Suppose $k=0$. It is not difficult to show by induction that ${\rm
gcd} (h_1(m),h_2(m))=1$. Then, since $w$ is an integer vector,
$\alpha \in \ZZ$ and $\lambda=1$ which belongs to
$E_\mu$.

Suppose $k\not = 0$. Then, $k = W_1 - W_2$ where $P(m)^{-1} w=
(W_1 , W_2)^T$. This gives,
\begin{align}
\label{varphi} \alpha H(1) & = \left(
\begin{matrix} \varphi - 1 & 2-\varphi \\ \varphi - 1  & 2-\varphi
\end{matrix}
\right)
P(m)^{-1} w .
\end{align}
The determinants of the matrices $A$ and $B$ are respectively
equal to 11 and 1. Therefore, since $P(m)=A^{l_m}B^{k_m}$ for some
$l_m,k_m \geq 0$,
$$\label{varphi} \alpha =(\varphi - 1, 2-\varphi) A^{-l_m} w',$$
with $w'\in \ZZ^2$. So $\lambda \in E_\mu$.

Conversely, let $\lambda \in E_\mu$. Then, since $M(n)=A$ for
infinitely many $n\geq 2$, for $n$ large enough we get
$$P(n)(\alpha H(1))=P(n) 
\left[
\begin{matrix}
\varphi-2 &-(\varphi -2) \\
 \varphi-1 &-(\varphi-1) \\
\end{matrix} \right]
+ w,
$$
where $w \in \ZZ^2$. Therefore, $\vvert P(n)(\alpha H(1))\vvert = c
\Vert P(n)v\Vert$, for some positive constant $c$, which proves
the claim.
\medskip

Finally, the proposition follows from Theorem \ref{ssi} and
property \eqref{somme-finie}.
\end{proof}

This proposition gives an example of a minimal dynamical system with a
nontrivial Kronecker factor and a trivial maximal equicontinuous
factor.

\bigskip

{\bf Acknowledgments.} 
We would like to thank Claude Dellacherie for his important advice
about the references concerning convergence of ``almost''
Martingale processes and Fran\c cois Parreau for illuminating
discussions. 

The first author thanks the CMM-CNRS 
who made this collaboration possible. 
The final version of this paper was written while the second
and third authors visited the 
Max-Planck Institute of 
Mathematics (Bonn). The support and hospitality of 
both institutions are very much appreciated.

The third author
acknowledge financial support from Programa Iniciativa
Cient\'{\i}fica Milenio P01-005 and FONDECYT 1010447. This project
was also partially supported by the international cooperation program 
ECOS-Conicyt C03-E03. 

We also thank the referee for many valuable comments.


\begin{thebibliography}{MMM}

\bibitem[CDHM]{CDHM}
M. I. Cortez, F. Durand, B. Host, A. Maass,
{\it Continuous and measurable eigenfunctions of linearly recurrent
          dynamical Cantor systems}, J. of the London
          Math. Soc. 67, No 3 (2003) 790 - 804.

\bibitem[Do]{Do}
J. L., Doob, 
Stochastic Processes, Wiley publications in statistics
(1953).

\bibitem[Du1]{Du1}
F. Durand,
{\it Linearly recurrent subshifts have a finite
number of non-periodic subshift factors},
Ergodic Theory and Dynamical Systems 20 (2000), 1061-1078.

\bibitem[Du2]{Du2}
F. Durand,
{\it Corrigendum and addendum to: Linearly recurrent subshifts have a
finite
number of non-periodic subshift factors},
Ergodic Theory and Dynamical Systems  23  (2003),  no. 2, 663-669.

\bibitem[DHS]{DHS}
F. Durand, B. Host, C. Skau,
{\it Substitutional dynamical systems, Bratteli diagrams and dimension
groups},
Ergodic Theory and Dynamical Systems 19 (1999), 953--993.

\bibitem[F]{fogg} P. Fogg, Substitutions in Dynamics, Arithmetics and Combinatorics, Lecture Notes in Mathematics
1794, Springer-Verlag (2002).

\bibitem[HPS]{HPS}
R. H. Herman, I. Putnam, C. F. Skau,
{\it Ordered Bratteli diagrams, dimension groups and topological
dynamics}, Internat. J. of Math. 3 (1992), 827--864.

\bibitem[Ho]{Ho}
B. Host,
{\it Valeurs propres des syst\`emes dynamiques d\'efinis par des
substitutions de longueur variable},
Ergodic Theory and Dynamical  Systems 6 (1986), 529--540.

\bibitem[Mc]{Mc}
D. L. McLeish,
A maximal inequality and dependent strong laws,
Ann. Probability 3 (1975), no. 5, 829--839.

\bibitem[Na]{Na}
M. G. Nadkarni,
Spectral Theory of Dynamical Systems,
Birkh\" auser Advanced Texts.


\bibitem[Qu]{Qu}
M. Queff\'elec, {\it Substitution Dynamical Systems-Spectral
Analysis}, Lecture Notes in Mathematics, 1294, Springer-Verlag, Berlin,
        1987.

\bibitem[Se]{Se}
E. Seneta,
Nonnegative matrices and Markov chains. Second edition. Springer Series
in Statistics.
Springer-Verlag, New York (1981).

\bibitem[Wa]{Wa}
P. Walters,
An Introduction to Ergodic Theory,
Graduate Texts in Mathematics, 79. Springer-Verlag, New York-Berlin, 1982.

\end{thebibliography}
\end{document}